\newtheorem{thm}{Theorem}[section]
\newtheorem{dfn}[thm]{Definition}
\newtheorem{lmm}[thm]{Lemma}
\newtheorem{prp}[thm]{Proposition}
\newtheorem{rem}[thm]{Remark}
\numberwithin{equation}{section}
\newcommand{\bbB}{\mathbb{B}}
\newcommand{\bbN}{\mathbb{N}}
\newcommand{\bbR}{\mathbb{R}}
\newcommand{\pl}{\mathbin{\varolessthan}}
\newcommand{\mpl}[1]{\mathbin{{}^{#1}\varolessthan}}
\newcommand{\rs}{\varodot}
\newcommand{\bsf}[1]{\boldsymbol{f}\!_{#1}}
\title{A note on the Taylor estimates of iterated paraproducts}
\author{Masato Hoshino\footnote{
Graduate School of Engineering Science, Osaka University, 
1-3, Machikaneyama, Toyonaka, Osaka, 560-8531, Japan. 
Email: {\tt hoshino@sigmath.es.osaka-u.ac.jp}
}
}
\date{}							% Activate to display a given date or no date
\begin{document}
\maketitle
%\section{}
%\subsection{}

\begin{abstract}      %optional
Bony's paraproduct is one of the main tools in the theory of paracontrolled calculus.
The paraproduct is usually defined via Fourier analysis, so it is not a local operator.
In the previous researches \cite{Hos20, Hos21}, however, the author proved that the pointwise estimate like \eqref{1:eq:Taylorf<g} holds for the paraproduct and its iterated versions when the sum of the regularities is smaller than $1$.
The aim of this article is to extend these results for higher regularities.
\end{abstract}

\section{Introduction}

This is a short note on the extensions of \cite{Hos20, Hos21}.
Besov spaces on Euclidean space $\bbR^d$ are usually defined via Fourier analysis,
and also equivalently defined via estimates of Taylor remainders.
Indeed, if $\alpha\in(0,\infty)\setminus\bbN$ and $f\in B_{\infty,\infty}^\alpha=B_{\infty,\infty}^\alpha(\bbR^d)$, then $f$ is differentiable up to order $[\alpha]$ (integer part of $\alpha$) and has the estimate
\begin{align}\label{1:def:Omegaalpha}
\Omega^\alpha(f)(\cdot,x):=
f(\cdot)-\sum_{|k|<\alpha}\frac{(\cdot-x)^k}{k!}\partial^kf(x)=O(|\cdot-x|^\alpha),\qquad x\in\bbR^d.
\end{align}
See \cite[Theorem 2.36]{BCD11} for instance.
In \cite{Hos20, Hos21}, the author proved similar estimates for \emph{Bony's paraproduct}.
One of the results of \cite{Hos20} says that, if $\alpha,\beta>0$, $\alpha+\beta<1$, $f\in B_{\infty,\infty}^\alpha$, and $g\in B_{\infty,\infty}^\beta$, then the paraproduct $f\pl g$ (see Proposition \ref{prp:paraproduct}) has an estimate
\begin{align}\label{1:eq:Taylorf<g}
(f\pl g)(\cdot)=(f\pl g)(x)-f(x)(g(\cdot)-g(x))+O(|\cdot-x|^{\alpha+\beta}),\qquad x\in\bbR^d.
\end{align}
In \cite{Hos20, Hos21}, similar estimates were obtained for iterated paraproducts $((f_1\pl f_2)\pl\cdots)\pl f_n$,
and applied to the proof of generalized commutator estimates.
The commutator estimate first proved in \cite[Lemma 2.4]{GIP15} has an important role in the theory of \emph{paracontrolled calculus} \cite{GIP15, BB19}.
See the remarks after Theorem \ref{mainthm} for an overview of the theory.
In these articles, however, the sum of regularities is always assumed to be smaller than $1$.
The aim of this article is to extend these results for higher regularities. 

Fix $d\ge1$ throughout this article.
For any multiindex $k=(k_i)_{i=1}^d\in\bbN^d$, denote $|k|:=\sum_{i=1}^dk_i$
and $k!=\prod_{i=1}^dk_i!$.
We use the notation $a\lesssim b$ to mean $a\le cb$ for some constant $c>0$.

\begin{thm}\label{mainthm}
Let $\alpha,\beta,\gamma\in(0,\infty)\setminus\bbN$, $\alpha+\beta,\beta+\gamma,\alpha+\beta+\gamma\notin\bbN$, $f\in B_{\infty,\infty}^\alpha$, $g\in B_{\infty,\infty}^\beta$, and $h\in B_{\infty,\infty}^\gamma$.
\begin{enumerate}
\item[\rm (1)]
Define
\begin{align}\label{mainthm1}
\begin{aligned}
\Omega^{\alpha,\beta}(f,g)(y,x)&:=(f\pl g)(y)-\sum_{|k|<\alpha+\beta}\frac{(y-x)^k}{k!}D^k(f,g)(x)\\
&\quad-\sum_{|k|<\alpha}\frac{(y-x)^k}{k!}\partial^kf(x)\Omega^\beta(g)(y,x),
\end{aligned}
\end{align}
where
\begin{align}\label{eq:D(f,g)}
D^k(f,g)=\partial^k(f\pl g)-\sum_{\substack{k_1+k_2=k\\|k_1|<\alpha,\ |k_2|\ge\beta}}\binom{k}{k_1}\partial^{k_1}f\partial^{k_2}g
\end{align}
is a bounded continuous function (see Lemma \ref{lem:D^kisBC}), and $\Omega^\beta(g)$ is the same notation as \eqref{1:def:Omegaalpha}.
Then 
$$
\sup_{x,y\in\bbR^d}\frac{|\Omega^{\alpha,\beta}(f,g)(y,x)|}{|y-x|^{\alpha+\beta}}\lesssim\|f\|_{B_{\infty,\infty}^\alpha}\|g\|_{B_{\infty,\infty}^\beta}.
$$
\item[\rm (2)]
Define
\begin{align}\label{mainthm2}
\begin{aligned}
\Omega^{\alpha,\beta,\gamma}(f,g,h)(y,x)
&:=\big((f\pl g)\pl h\big)(y)
-\sum_{|k|<\alpha+\beta+\gamma}\frac{(y-x)^k}{k!}D^k(f,g,h)(x)\\
&\quad-\sum_{|k+\ell|<\alpha}\frac{(y-x)^k}{k!}\partial^{k+\ell}f(x)\Omega_\ell^{\beta,\gamma}(g,h)(y,x)\\
&\quad-\sum_{|k|<\alpha+\beta}\frac{(y-x)^k}{k!}D^k(f,g)(x)\Omega^\gamma(h)(y,x)
\end{aligned}
\end{align}
for some choice of bounded continuous functions $D^k(f,g,h)$,
and
\begin{align*}
\Omega_\ell^{\beta,\gamma}(g,h)
=
\begin{cases}
\Omega^{\beta,\gamma}(g,h),&\ell=0,\\
\Omega^{\beta+\gamma+|\ell|}(g\mpl{\ell}h),&\ell\neq0
\end{cases}
\end{align*}
for some $g\mpl{\ell}h\in B_{\infty,\infty}^{\beta+\gamma+|\ell|}$.
Here $\Omega^{\beta,\gamma}$ and $\Omega^\gamma$ are the same notations as \eqref{mainthm1} and \eqref{1:def:Omegaalpha}, respectively.
Then
$$
\sup_{x,y\in\bbR^d}\frac{|\Omega^{\alpha,\beta}(f,g,h)(y,x)|}{|y-x|^{\alpha+\beta+\gamma}}\lesssim\|f\|_{B_{\infty,\infty}^\alpha}\|g\|_{B_{\infty,\infty}^\beta}\|h\|_{B_{\infty,\infty}^\gamma}.
$$
\end{enumerate}
\end{thm}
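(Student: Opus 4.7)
The plan is to decompose the paraproduct via Littlewood--Paley blocks and perform a careful Taylor expansion at the level of each block, exploiting the smoothness of $S_{j-1}f$ and $\Delta_j g$ together with the Besov bounds on their high-order derivatives.

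For Part (1), I would begin from the Bony decomposition
$$
(f\pl g)(y)=\sum_j S_{j-1}f(y)\,\Delta_j g(y)
$$
and apply Taylor's formula to each factor around $x$: for $S_{j-1}f$ I would truncate at order $[\alpha]$ and for $\Delta_j g$ at order $[\beta]$. Expanding the product term by term yields a pure polynomial part in $(y-x)$, plus cross terms (polynomial times Taylor remainder) and a remainder-times-remainder term. After summing over $j$, the pure polynomial part should be regrouped along the two axes of the Leibniz index $(k_1,k_2)$: the contributions with $|k_2|<\beta$ recombine with the pointwise value $g(y)$ to produce the cross-sum $\sum_{|k_1|<\alpha}\frac{(y-x)^{k_1}}{k_1!}\partial^{k_1}f(x)\Omega^\beta(g)(y,x)$, while the contributions with $|k_2|\ge\beta$ are exactly the ones subtracted in the definition of $D^k(f,g)$ and so regroup into $\sum_{|k|<\alpha+\beta}\frac{(y-x)^k}{k!}D^k(f,g)(x)$. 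The residual cross and remainder-times-remainder terms would be bounded by splitting $\sum_j$ at the critical frequency $2^j\sim|y-x|^{-1}$ and invoking the standard block estimates $\|\partial^{k_1}S_{j-1}f\|_\infty \lesssim 2^{j\max(|k_1|-\alpha,0)}\|f\|_{B_{\infty,\infty}^\alpha}$ and $\|\partial^{k_2}\Delta_j g\|_\infty \lesssim 2^{j(|k_2|-\beta)}\|g\|_{B_{\infty,\infty}^\beta}$; the geometric series in $j$ converge at the desired rate $|y-x|^{\alpha+\beta}$ precisely because $\alpha,\beta,\alpha+\beta\notin\bbN$.

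For Part (2), I would apply the same scheme to
$$
((f\pl g)\pl h)(y)=\sum_j S_{j-1}(f\pl g)(y)\,\Delta_j h(y),
$$
using Part (1) to expand $S_{j-1}(f\pl g)(y)$ around $x$. The Taylor expansion now produces polynomial coefficients of two types: those originating from the $D^k(f,g)$-terms (which, after multiplication by $\Delta_j h$ and summation, contribute the $D^k(f,g)(x)\Omega^\gamma(h)(y,x)$-cross-sum and the bulk polynomial $\sum\frac{(y-x)^k}{k!}D^k(f,g,h)(x)$) and those coming from the internal cross term $\partial^k f(x)\Omega^\beta(g)(y,x)$. The latter, when multiplied against the Taylor expansion of $\Delta_j h(y)$ around $x$ and summed over $j$, splits according to the extra Taylor index $\ell$: the $\ell=0$ piece is $\Omega^{\beta,\gamma}(g,h)$ by Part (1), and the $\ell\ne 0$ pieces are identified as Taylor remainders of new mixed paraproducts $g\mpl{\ell}h\in B_{\infty,\infty}^{\beta+\gamma+|\ell|}$ defined by the explicit Littlewood--Paley formula produced in this step. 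Their regularity follows from a parallel block estimate.

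The main obstacle will be the algebraic bookkeeping in Part (2): one has to identify the precise splitting of the iterated paraproduct that isolates each family of boundary terms whose pointwise product fails to exist (to be absorbed into $D^k(f,g,h)$), while simultaneously matching the leftover polynomial coefficients against $D^k(f,g)\Omega^\gamma(h)$ and the mixed objects $\Omega_\ell^{\beta,\gamma}(g,h)$. Verifying that every intermediate object is bounded continuous relies on the non-integrality hypotheses on $\alpha+\beta$, $\beta+\gamma$, and $\alpha+\beta+\gamma$ to avoid resonance at integer exponents. Once this bookkeeping is settled, the quantitative bounds reduce to summing geometric series in $j$ split at $2^j\sim|y-x|^{-1}$, which is routine given the standard Littlewood--Paley estimates.
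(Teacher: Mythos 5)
Your sketch captures the right high-level strategy — Taylor expand block by block, regroup the Leibniz indices, and split the frequency sum at $2^j\sim|y-x|^{-1}$ — but there are two concrete gaps, one in each part, and the second one is serious.

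For Part (1): after Taylor-expanding $S_{j-1}f$ and $\Delta_j g$, the polynomial coefficient of $(y-x)^{k_1}$ in the cross term is $\partial^{k_1}S_{j-1}f(x)$, not $\partial^{k_1}f(x)$. To arrive at the stated formula you must replace the former by the latter and control the tail $\sum_{i\ge j-1}\partial^{k_1}\Delta_i f$, and the same replacement is needed for $D^k(f,g)$ itself to be well-defined (it involves $\partial^k(f\pl g)$ for $|k|\ge\beta$, which is not a bounded function without the subtracted correction). The paper handles this by introducing the quantities $C^{k,j}_{1\cdots n}$ (Lemma~\ref{lmm3}) which are precisely these tails, and proving the nontrivial recursive identities \eqref{eq:WDofD} and \eqref{eq:explicitC}. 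Your proposal never mentions this replacement, and for two components it is a manageable but nontrivial omission.

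For Part (2) the gap is structural. You propose ``using Part (1) to expand $S_{j-1}(f\pl g)(y)$ around $x$,'' but Part (1) is a statement about the full function $f\pl g$, not about its Littlewood--Paley blocks, and $\Delta_j(f\pl g)\neq\Delta_{<j-1}f\,\Delta_j g$ (see Remark~\ref{rem:<j-N}): the iterated paraproduct $(f\pl g)\pl h$ is \emph{not} an iterated multicomponent operator. The paper needs the separate Lemma~\ref{lmm:R}, which is a genuine block-level refinement of Part (1): it decomposes $\Delta_{j-1}(f\pl g)$ as $\sum_{|k|<\alpha}\Delta_{<j-2}(\partial^kf)\,\Delta_{j-1}^kg+R^j$ with $\{R^j\}\in\bbB^{\alpha+\beta}$, where $\Delta_{j-1}^kg$ is a moment-modified Littlewood--Paley block. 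This lemma is what produces the explicit mixed paraproducts $g\mpl{k}h=\sum_j\Delta_{<j-1}^kg\,\Delta_jh$ appearing as $\Omega_\ell^{\beta,\gamma}$, and without it the objects $g\mpl{\ell}h$ in your sketch have no definition and the claimed membership $g\mpl{\ell}h\in B_{\infty,\infty}^{\beta+\gamma+|\ell|}$ has no proof. Deducing Lemma~\ref{lmm:R} from the pointwise statement of Part (1) requires integrating the two-variable bound against $\mathcal{F}^{-1}\rho_{j-1}(x-y)$ and using the moment-vanishing of that kernel; this step is not a consequence of Part (1) as stated, and is the heart of the reduction. In short, the ``algebraic bookkeeping'' you flag as the main obstacle is exactly where the paper does its work (the abstract Section~\ref{section:preliminaries} and Lemma~\ref{lmm:R}), and your sketch does not supply a replacement for it.
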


\begin{rem}
In \cite{Hos20, Hos21}, the author proved the pointwise estimates for iterated paraproducts $((f_1\pl f_2)\pl\cdots)\pl f_n$.
For $n=2,3$, this result takes the form of \eqref{1:eq:Taylorf<g} and
\begin{align*}
&\big((f\pl g)\pl h\big)(y)-\big((f\pl g)\pl h\big)(x)\\
&\quad-f(x)\big\{(g\pl h)(y)-(g\pl h)(x)\big\}
-(f\pl g)(x)\big(h(y)-h(x)\big)\\
&=O(|y-x|^{\alpha+\beta+\gamma})
\end{align*}
for $(f,g,h)\in B_{\infty,\infty}^\alpha\times B_{\infty,\infty}^\beta\times B_{\infty,\infty}^\gamma$ with $\alpha,\beta,\gamma>0$ and $\alpha+\beta+\gamma<1$.
Here is a restriction that the sum of regularities should be smaller than $1$.
The formulas \eqref{mainthm1} and \eqref{mainthm2} are extensions of \cite{Hos20, Hos21} for higher regularities.
\end{rem}

Let us briefly recall the role of paraproducts and commutators in the theory of paracontrolled calculus.
The main subject of paracontrolled calculus is solving PDEs with stochastic noise, for example, (generalized) parabolic Anderson model
\begin{align}\label{gpam}
(\partial_t-\Delta)u=f(u)\xi,
\end{align}
where $\xi$ is a stochastic noise. A typical choice of $\xi$ is a Gaussian white noise on $\bbR^d$, so it has a regularity only less than $-\frac{d}2$.
Then $u$ would have a regularity less than $2-\frac{d}2$ by Schauder estimate, but it is not enough to define the product $f(u)\xi$, since the sum of regularities of $f(u)$ and $\xi$ is $(2-\frac{d}2)+(-\frac{d}2)\le0$ if $d\ge2$.
To overcome this difficulty, we impose some structural assumption on $u$. In \cite{GIP15}, the authors solved \eqref{gpam} in $d=2$ by assuming that $u$ is of the form
$$
u=u'\pl Z+u^\sharp,\qquad u'\in B_{\infty,\infty}^\alpha,\ u^\sharp\in B_{\infty,\infty}^{2\alpha}
$$
with some $\alpha<1$.
In \cite{BB19}, the authors solved \eqref{gpam} in $d=3$ by assuming
\begin{align*}
u&=\sum_{i=1}^3u_i\pl Z_i+u^\sharp,\qquad u^\sharp\in B_{\infty,\infty}^{4\alpha},\\
u_i&=\sum_{1\le i+j\le 3}u_{ij}\pl Z_j+u_i^\sharp,\qquad u_i^\sharp\in B_{\infty,\infty}^{(4-i)\alpha},\\
u_{ij}&=\sum_{1\le i+j+k\le 3}u_{ijk}\pl Z_k+u_{ij}^\sharp,\qquad u_{ij}^\sharp\in B_{\infty,\infty}^{(4-i-j)\alpha},\ u_{ijk}\in B_{\infty,\infty}^{\alpha}
\end{align*}
over $1\le i,i+j,i+j+k\le 3$, with some $\alpha<\frac12$.
Here $Z$ and $Z_i$'s are some explicit functionals of $\xi$, defined by a stochastic approach called renormalization.
For the product of unknown functions and $\xi$, we use the commutator estimates.
%They yield that, in order to define the product $f(u)\xi$, it is sufficient to define some nonlinear functionals of $Z$'s and $\xi$, which are defined by purely stochastic approaches (so-called renormalizations).
In 2d case for instance, the trilinear operator
$$
C(u',Z,\xi):=(u'\pl Z)\rs\xi-u'(Z\rs\xi)
$$
is called a commutator, where $\rs$ is a resonant product (see Proposition \ref{prp:paraproduct}).
Two resonants in the right-hand side are ill-defined since $Z\in B_{\infty,\infty}^\alpha$ and $\xi\in B_{\infty,\infty}^{\alpha-2}$ with $\alpha<1$,
but it turns out that $C$ is actually a continuous operator of $(u',Z,\xi)$ if $\alpha\in(\frac23,1)$.
It yields that $(u'\pl Z)\rs\xi:=u'(Z\rs\xi)+C(u',Z,\xi)$ is well-defined and continuous from $(u',Z,\xi,Z\rs\xi)$.
We can extend this argument to define the resonant $f(u)\rs\xi$, by using a paralinearization formula \cite[Theorem 2.92]{BCD11}.
In 3d case, more commutations between iterated paraproducts and resonants are needed. See \cite{BB19} for details.
%
%
%Hence we can establish the well-posedness result for \eqref{gpam} by constructing a solution map from inputs $Z_i$'s to outputs $(u',u^\sharp)$ or $(u^\sharp,u_i^\sharp,u_{ij}^\sharp,u_{ijk})$.

Instead of paracontrolled calculus, we can also prove the same well-posedness results by the theory of \emph{regularity structures} \cite{Hai14}.
One of the differences between those theories is how to write the assumptions of solutions.
In the former, main tools are paraproducts and commutators, so it is purely a harmonic analysis.
In the latter, the solution is defined as a ``modelled distribution," which is a kind of generalized Taylor expansion like \eqref{mainthm1} and \eqref{mainthm2}.
Hence, it is a natural question whether the Fourier picture and the Taylor picture are equivalent.
Theorem \ref{mainthm} is an answer to that question in the particular case.
For other studies on this direction, see Section 6 of \cite{GIP15}, and \cite{MP20, BH21a, BH21b}.

This article is organized as follows.
Section \ref{section:preliminaries} is devoted to the proof of Theorem \ref{thm:abstractparaproduct}, which is an essential part of this article.
In Section \ref{section:pointwise}, we prove Theorem \ref{mainthm} as a corollary of Theorem \ref{thm:abstractparaproduct}.

We remark that the formulas in Theorem \ref{thm:abstractparaproduct} are simpler than those in Theorem \ref{mainthm}. In the former, $\Omega_\ell^{\beta,\gamma}(g,h)$ term with $\ell\neq0$ does not appear.
It may imply that, the multicomponent operators as in Definition \ref{dfn:simpleparaproduct} instead of the iterated paraproducts $((f_1\pl f_2)\pl\cdots)\pl f_n$ make the things simpler, but we do not touch such a problem in this article.

%%%%%%%%%%%%%%%%%%%%%%%%%%%%%%%%%%%%%%%%%%%%%%%%%%%%%%%%%%%%%%%%%%%%%%%%%%%%%%%%%%%

\section{Key estimates}\label{section:preliminaries}

The aim of this section is to prove Theorem \ref{thm:abstractparaproduct}.
First we recall the basics of Littlewood-Paley theory and Bony's paraproduct.

\begin{dfn}\label{dfn:paraproduct}
Fix smooth, nonnegative, and radial functions $\chi$ and $\rho$ on $\mathbb{R}^d$ such that
\begin{itemize}
\item $\mathop{\rm supp}(\chi)\subset\{x\ ;\, |x|<\frac43\}$ and $\mathop{\rm supp}(\rho)\subset\{x\ ;\, \frac34<|x|<\frac83\}$,
\item $\chi(x)+\sum_{j=0}^\infty\rho(2^{-j}x)=1$ for all $x\in\mathbb{R}^d$.
\end{itemize}
Set $\rho_{-1}:=\chi$ and $\rho_j=\rho(2^{-j}\cdot)$ for $j\ge0$.
For each $j\ge-1$, define the Littlewood-Paley blocks
$$
\Delta_jf:=\mathcal{F}^{-1}(\rho_j\mathcal{F}f)
$$
of $f\in\mathcal{S}'=\mathcal{S}'(\mathbb{R}^d)$, where $\mathcal{F}$ and $\mathcal{F}^{-1}$ are Fourier transform and its inverse, respectively.
Moreover, for any $\alpha\in\mathbb{R}$ and $p,q\in[1,\infty]$, define $B_{p,q}^\alpha$ as the set of all $f\in\mathcal{S}'$ such that
$$
\|f\|_{B_{p,q}^\alpha}:=\Big\|\big\{2^{j\alpha}\|\Delta_jf\|_{L^p(\mathbb{R}^d)}\big\}_{j=-1}^\infty\Big\|_{\ell^q}<\infty.
$$
\end{dfn}

Littlewood-Paley blocks decompose $f\in\mathcal{S}'(\mathbb{R}^d)$ into the infinite sum of smooth functions $\Delta_jf$.
The support conditions of $\chi$ and $\rho$ are to ensure that spectral supports of $\Delta_if$ and $\Delta_jf$ are disjoint if $|i-j|\ge2$.
One of the applications of this decomposition is Bony's paraproduct.

\begin{prp}[{\cite[Theorems 2.82 and 2.85]{BCD11}}]\label{prp:paraproduct}
For $f,g\in\mathcal{S}'$, define
$$
f\pl g=\sum_{i<j-1}\Delta_if\Delta_jg,\qquad
f\rs g=\sum_{|i-j|\le1} \Delta_if\Delta_jg,
$$
if the right-hand side converges in $\mathcal{S}'$.
Let $\alpha,\beta\in\bbR$ and $p,p_1,p_2,q,q_1,q_2\in[1,\infty]$ be such that $\frac1p=\frac1{p_1}+\frac1{p_2}$ and $\frac1q=\frac1{q_1}+\frac1{q_2}$.
\begin{enumerate}
\item[\rm (1)]
If $\alpha\neq0$, then $\|f\pl g\|_{B_{p,q}^{\alpha\wedge 0+\beta}}\lesssim\|f\|_{B_{p_1,q_1}^\alpha}\|g\|_{B_{p_2,q_2}^\beta}$.
\item[\rm (2)]
If $\alpha+\beta>0$, then $\|f\rs g\|_{B_{p,q}^{\alpha+\beta}}\lesssim\|f\|_{B_{p_1,q_1}^\alpha}\|g\|_{B_{p_2,q_2}^\beta}$.
\end{enumerate}
\end{prp}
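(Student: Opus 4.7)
The plan is to carry out Bony's classical Littlewood--Paley proof: localize each dyadic product $\Delta_if\,\Delta_jg$ in frequency, apply H\"older at the block level, and then reduce the Besov norm to a discrete Young / H\"older estimate on sequences. Write $a_i:=2^{i\alpha}\|\Delta_if\|_{L^{p_1}}$ and $b_j:=2^{j\beta}\|\Delta_jg\|_{L^{p_2}}$, so that $\|a\|_{\ell^{q_1}}=\|f\|_{B_{p_1,q_1}^\alpha}$ and $\|b\|_{\ell^{q_2}}=\|g\|_{B_{p_2,q_2}^\beta}$; H\"older with $1/p=1/p_1+1/p_2$ and the $L^p$-continuity of $\Delta_k$ give the basic block bound $\|\Delta_k(\Delta_if\,\Delta_jg)\|_{L^p}\lesssim 2^{-i\alpha-j\beta}a_ib_j$. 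The first step is spectral bookkeeping: from the supports of $\chi$ and $\rho$ one checks that $\mathcal{F}(\Delta_if\,\Delta_jg)$ is supported in an annulus of scale $2^{\max(i,j)}$ when $|i-j|\ge 2$, and in a ball of the same scale when $|i-j|\le 1$. Hence there exists a universal $N_0$ such that $\Delta_k(f\pl g)$ reduces to a sum over $\{(i,j):|j-k|\le N_0,\ i<j-1\}$, while $\Delta_k(f\rs g)$ reduces to a sum over $\{(i,j):|i-j|\le 1,\ \max(i,j)\ge k-N_0\}$.

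For part (1), I would substitute the block estimate into the reduced sum for the paraproduct. The inner low-frequency factor $\sum_{i\le j-2}\|\Delta_if\|_{L^{p_1}}=\sum_{i\le j-2}2^{-i\alpha}a_i$ is, after the shift $n=j-i$, a discrete convolution of $a$ with the kernel $\{2^{-n\alpha}\mathbf{1}_{n\ge 2}\}_n$. When $\alpha<0$ this kernel decays geometrically and lies in $\ell^1$; discrete Young then produces a sequence in $\ell^{q_1}$ controlled by $\|a\|_{\ell^{q_1}}$, and the H\"older embedding $\ell^{q_1}\cdot\ell^{q_2}\hookrightarrow\ell^q$ yields $\|f\pl g\|_{B_{p,q}^{\alpha+\beta}}\lesssim\|f\|_{B_{p_1,q_1}^\alpha}\|g\|_{B_{p_2,q_2}^\beta}$. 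When $\alpha>0$ the convolution kernel is no longer summable; instead one uses the uniform bound $\|S_{j-2}f\|_{L^{p_1}}\lesssim\|f\|_{L^{p_1}}$ coming from $L^{p_1}$-boundedness of the low-frequency projector, combined with the embedding $B_{p_1,q_1}^\alpha\hookrightarrow L^{p_1}$ valid for $\alpha>0$. This costs the full regularity $\alpha$ and delivers only the $B_{p,q}^\beta$ bound, which is precisely the $\alpha\wedge 0$ cap in the statement; the borderline $\alpha=0$ is excluded because the kernel fails to be summable at that value.

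For part (2), the reduced sum for the resonant leads to
\[
2^{k(\alpha+\beta)}\|\Delta_k(f\rs g)\|_{L^p}\lesssim\sum_{n\ge -N_0-1}2^{-n(\alpha+\beta)}a_{k+n}\tilde b_{k+n},
\]
where $\tilde b_i:=\sum_{|j-i|\le 1}b_j$ satisfies $\|\tilde b\|_{\ell^{q_2}}\lesssim\|g\|_{B_{p_2,q_2}^\beta}$. The right-hand side is the convolution of $a\tilde b$ with the one-sided kernel $\{2^{-n(\alpha+\beta)}\mathbf{1}_{n\ge -N_0-1}\}$, and the hypothesis $\alpha+\beta>0$ is exactly what makes this kernel lie in $\ell^1$; Young on sequences together with H\"older $\ell^{q_1}\cdot\ell^{q_2}\hookrightarrow\ell^q$ then closes the estimate. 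The main obstacle of the whole argument is therefore the sign dichotomy in part (1): for $\alpha<0$ the regularity of $f$ transfers to $f\pl g$ via a summable convolution kernel, while for $\alpha>0$ no such transfer is possible at the dyadic level and one must absorb $f$ into an $L^{p_1}$ bound, capping the regularity gain at $\beta$.
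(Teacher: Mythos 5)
You give no new route here: the paper itself offers no proof of this proposition, citing \cite[Theorems 2.82 and 2.85]{BCD11}, and your argument is exactly the standard proof from that source — spectral localization of $\big(\sum_{i<j-1}\Delta_if\big)\Delta_jg$ in annuli of scale $2^j$ and of $\Delta_if\Delta_jg$, $|i-j|\le1$, in balls of scale $2^{\max(i,j)}$, block-level H\"older, then discrete Young and H\"older on the dyadic sequences, with the $\alpha\wedge0$ cap and the condition $\alpha+\beta>0$ arising precisely as you explain. One cosmetic slip: after the shift $n=j-i$ the weighted kernel in part (1) is $2^{n\alpha}\mathbf{1}_{n\ge2}$ rather than $2^{-n\alpha}\mathbf{1}_{n\ge2}$, which is summable exactly when $\alpha<0$, consistent with the dichotomy you actually use, so the argument is correct as intended.
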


We also consider a slightly more abstract setting.

\begin{dfn}\label{dfn:simpleparaproduct}
Let $\alpha\in\bbR$ and $p,q\in[1,\infty]$.
Define $\bbB_{p,q}^\alpha$ as the set of all sequences $\boldsymbol{f}=\{f^j\}_{j=0}^\infty$ of smooth functions on $\bbR^d$ such that,
$\mathcal{F}f^j$ is supported in $2^jB=\{2^jx\ ;\, x\in B\}$ with some ball $B\subset\bbR^d$ for all $j\ge0$, and
$$
\|\boldsymbol{f}\|_{\bbB_{p,q}^\alpha}:=\Big\|\big\{2^{j\alpha}\|f^j\|_{L^p(\mathbb{R}^d)}\big\}_{j=0}^\infty\Big\|_{\ell^q}<\infty.
$$
\end{dfn}

We treat only $p=q=\infty$ in this article, but the following arguments still work for general $p,q$ by easy modifications.
We write $\bbB^\alpha:=\bbB_{\infty,\infty}^\alpha$ and $\|\cdot\|_\alpha:=\|\cdot\|_{\bbB_{\infty,\infty}^\alpha}$ in short.
Moreover, we treat only $\alpha>0$.
The following estimates follow from the definitions and Bernstein's lemma \cite[Lemma 2.1]{BCD11}.
\begin{itemize}
\item
For any $\boldsymbol{f}\in\bbB^\alpha$ with $\alpha>0$, the series $\sum\boldsymbol{f}:=\sum_{j=0}^\infty f^j$ converges in $B_{\infty,\infty}^\alpha$,
and one has $\|\sum\boldsymbol{f}\|_{B_{\infty,\infty}^\alpha}\lesssim\|\boldsymbol{f}\|_\alpha$.
\item
$\|\partial^k\boldsymbol{f}\|_{\alpha-|k|}\lesssim\|\boldsymbol{f}\|_\alpha$ for any $k\in\bbN^d$, where $\partial^k\boldsymbol{f}:=\{\partial^kf^j\}_{j=0}^\infty$.
If $\alpha>0$, $\sum\partial^k\boldsymbol{f}$ converges to $\partial^k\sum\boldsymbol{f}$ in $\mathcal{S}'$, since
for any $\varphi\in\mathcal{S}$,
\begin{align*}
\Big(\sum_{j=0}^N\partial^kf^j,\varphi\Big)
=(-1)^{|k|}\Big(\sum_{j=0}^Nf^j,\partial^k\varphi\Big)
\xrightarrow{N\to\infty}(-1)^{|k|}\big(\sum\boldsymbol{f},\partial^k\varphi\big)=\big(\partial^k\sum\boldsymbol{f},\varphi\big).
\end{align*}
\end{itemize}

We consider an analogue of Bony's paraproduct for these sequences.
For any $\boldsymbol{f}\in\bbB^\alpha$ and $\boldsymbol{g}\in\bbB^\beta$ with $\alpha,\beta>0$, we define
\begin{align}\label{eq:abstractparaproduct2}
(\boldsymbol{f},\boldsymbol{g}):=\{f^{<j}g^j\}_{j=0}^\infty,
\end{align}
where $f^{<j}:=\sum_{i<j}f^i$.
Since $\|f^{<j}\|_{L^\infty(\mathbb{R}^d)}$ is bounded over $j$, it follows that $(\boldsymbol{f},\boldsymbol{g})\in\bbB^{\beta}$.
Similarly, for any sequence $\bsf{n}\in\bbB^{\alpha_n}$ with $\alpha_n>0$ ($n=1,2,\dots$), we recursively define
$$
(\bsf{1},\bsf{2},\dots,\bsf{n}):=((\bsf{1},\bsf{2},\dots,\bsf{n-1}),\bsf{n}).
$$

\begin{rem}\label{rem:<j-N}
The following arguments still work if we replace $f^{<j}$ with $f^{<j-N}$ for some fixed $N>0$.
For instance, $N=1$ was chosen in the above definition of Bony's paraproduct.
If we define $\bsf{n}=\{\Delta_{j-1}f_n\}_{j=0}^\infty$ for some $f_n\in\mathcal{S}'$, then the series $\sum(\bsf{1},\bsf{2})=\sum_j\Delta_{<j-1}f_1\Delta_jf_2$ coincides with $f_1\pl f_2$.
Note that $\sum(\bsf{1},\bsf{2},\bsf{3})\neq(f_1\pl f_2)\pl f_3$ in general, since $\Delta_j(f_1\pl f_2)\neq \Delta_{<j-1}f_1\Delta_jf_2$.
\end{rem}

We define the Taylor remainders for these multicomponent operators.
For any multiindices $k=(k_i)_{i=1}^d$ and $\ell=(\ell_i)_{i=1}^d$ such that $k_i\ge \ell_i$, define $\binom{k}{\ell}:=\frac{k!}{\ell!(k-\ell)!}$.

\begin{dfn}\label{dfn:Omega}
For any sequence $\bsf{n}\in\bbB^{\alpha_n}$ with $\alpha_n>0$ ($n=1,2,\dots$), recursively define
\begin{align*}
\alpha_{1\cdots n}&:=\alpha_1+\cdots+\alpha_n,\\
\{f_{1\cdots n}^j\}_{j=0}^\infty&:=(\bsf{1},\dots,\bsf{n}),\qquad
f_{1\cdots n}:=\sum_{j=0}^\infty f_{1\cdots n}^j,\\
D_{1\cdots n}^k&:=\partial^kf_{1\cdots n}-\sum_{m=1}^{n-1}
\sum_{\substack{k_1+k_2=k \\ |k_1|<\alpha_{1\cdots m} \\ |k_2|\ge\alpha_{(m+1)\cdots n}}}\binom{k}{k_1}D_{1\cdots m}^{k_1}D_{(m+1)\cdots n}^{k_2}\qquad(k\in\bbN^d),\\
T_{1\cdots n}(\cdot,x)&:=\sum_{|k|<\alpha_{1\cdots n}}\frac{(\cdot-x)^k}{k!}D_{1\cdots n}^k(x),\\
\Omega_{1\cdots n}(\cdot,x)&:=f_{1\cdots n}(\cdot)-T_{1\cdots n}(\cdot,x)-\sum_{m=1}^{n-1} T_{1\cdots m}(\cdot,x)\Omega_{(m+1)\cdots n}(\cdot,x),
\end{align*}
where $D_{(m+1)\cdots n}^k$ and $\Omega_{(m+1)\cdots n}$ are defined by the same way as $D_{1\cdots(n-m)}^k$ and $\Omega_{1\cdots(n-m)}$ respectively, by replacing $\bsf{1},\dots,\bsf{n-m}$ with $\bsf{m+1},\dots,\bsf{n}$.
In other words, $D_w^k$, $\Omega_w$, etc. are defined for any finite sequence $w=i_1i_2\cdots i_n$ of positive integers.
\end{dfn}

To make sense of the above definition, we need the following lemma.
Throughout this section, we assume that
\begin{align}\label{assump}
\alpha_{m\cdots n}=\alpha_m+\cdots+\alpha_n\notin\bbN\quad\text{for any $1\le m\le n$}.
\end{align}

\begin{lmm}\label{lem:D^kisBC}
Under the assumption \eqref{assump}, $D_{1\cdots n}^k$ is well-defined as a bounded continuous function for any $|k|<\alpha_{1\cdots n}$.
\end{lmm}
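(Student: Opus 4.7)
The proof is by induction on $n$. The base case $n=1$ follows at once from the embedding $\partial^k f_1 \in B_{\infty,\infty}^{\alpha_1-|k|}\hookrightarrow C^0_b$, valid for $|k|<\alpha_1$.

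For the inductive step, the strategy is to rewrite $D^k_{1\cdots n}$ as a finite sum of bounded continuous functions. I would start from the Leibniz expansion
\begin{align*}
\partial^k f_{1\cdots n} = \sum_{j\ge 0}\sum_{k_1+k_2=k}\binom{k}{k_1}\,\partial^{k_1} f^{<j}_{1\cdots(n-1)}\,\partial^{k_2} f^j_n
\end{align*}
and decompose $\partial^{k_1} f^{<j}_{1\cdots(n-1)}=\partial^{k_1} f_{1\cdots(n-1)}-\partial^{k_1} f^{\ge j}_{1\cdots(n-1)}$. The first part, summed over $j$, produces $\sum_{k_1+k_2=k}\binom{k}{k_1}\partial^{k_1} f_{1\cdots(n-1)}\cdot \partial^{k_2}f_n$, into which I recursively substitute the inductive formula $\partial^{k_1} f_{1\cdots(n-1)} = D^{k_1}_{1\cdots(n-1)} + (\text{lower-level subtractions})$ and unfold. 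After reorganization, the resulting terms of the form $D^{k_1}_{1\cdots m} D^{k_2}_{(m+1)\cdots n}$ with $|k_1|<\alpha_{1\cdots m}$ and $|k_2|\ge \alpha_{(m+1)\cdots n}$ cancel exactly against the subtractions in the definition of $D^k_{1\cdots n}$. What remains consists of two types of contributions: (i) tail terms $\sum_j \partial^{k_1}f^{\ge j}_{1\cdots(n-1)}\partial^{k_2}f^j_n$, whose $L^\infty$ bounds come from Bernstein's lemma combined with the inductive $\bbB$-structure of $f_{1\cdots(n-1)}$, and (ii) a finite combination of Bony-type products of partial derivatives whose total regularity is $\alpha_{1\cdots n}-|k|>0$, which are therefore bounded continuous by Proposition \ref{prp:paraproduct} together with $B^\varepsilon_{\infty,\infty}\hookrightarrow C^0_b$. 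Continuity of $D^k_{1\cdots n}$ is preserved because each summand is continuous and the decompositions converge uniformly.

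The principal obstacle is the combinatorial bookkeeping: after the recursive unrolling of $\partial^{k_1} f_{1\cdots(n-1)}$, one must match each produced product $D^{k_1}_{1\cdots m}D^{k_2}_{(m+1)\cdots n}$ bijectively with a subtraction in the definition of $D^k_{1\cdots n}$, summed over $m\in\{1,\ldots,n-1\}$ and valid splittings $(k_1,k_2)$. A further subtle point is that the subtracted products involve $D^{k_2}_{(m+1)\cdots n}$ with $|k_2|\ge\alpha_{(m+1)\cdots n}$, where the second factor is only a tempered distribution; the product must be interpreted via Bony's decomposition, whose resonant piece is well-defined precisely because the total regularity $\alpha_{1\cdots n}-|k|$ is positive. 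The non-integrality assumption \eqref{assump} is used throughout the argument to avoid borderline logarithmic divergences when summing geometric series $\sum_i 2^{i(|k|-\alpha)}$ over partial index ranges.
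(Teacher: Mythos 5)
Your overall plan (induction on $n$, Leibniz on $f_{1\cdots n}^j=f_{1\cdots(n-1)}^{<j}f_n^j$, and cancellation against the subtracted $D^{k_1}_{1\cdots m}D^{k_2}_{(m+1)\cdots n}$ terms) is in the right spirit, but the specific decomposition you propose does not close. The crux is your tail term $\sum_j\partial^{k_1}f^{\ge j}_{1\cdots(n-1)}\partial^{k_2}f^j_n$: the sequence $(\bsf1,\dots,\bsf{n-1})$ lives only in $\bbB^{\alpha_{n-1}}$, not in $\bbB^{\alpha_{1\cdots(n-1)}}$ (one has $\|f^i_{1\cdots(n-1)}\|_{L^\infty}\lesssim 2^{-i\alpha_{n-1}}$, with no help from the lower factors since $\|f_{1\cdots(n-2)}^{<i}\|_{L^\infty}$ is merely bounded). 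Thus the naive tail decays only like $2^{-j(\alpha_{n-1}+\alpha_n-|k|)}$, which is \emph{not} summable in the regime $\alpha_{n-1}+\alpha_n\le|k|<\alpha_{1\cdots n}$ — exactly the new regime that this lemma must handle. The corrected tail that does decay at rate $2^{-j(\alpha_{1\cdots(n-1)}-|k_1|)}$ is the paper's quantity $C^{k_1,j}_{1\cdots(n-1)}$ from Lemma \ref{lmm3}, obtained from $\partial^{k_1}f^{<j}_{1\cdots(n-1)}$ by recursively subtracting $D$-weighted pieces; producing this correction (and proving the identity \eqref{eq:explicitC} that yields the decay) is the genuine content of the lemma, not a technicality. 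You flag the combinatorial matching of the $D^{k_1}_{1\cdots m}D^{k_2}_{(m+1)\cdots n}$ products against the subtractions as ``the principal obstacle,'' which is accurate, but acknowledging it does not overcome it; in the paper this is carried out via the explicit identity \eqref{eq:WDofD}.

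A secondary issue is that your step (ii) invokes Proposition \ref{prp:paraproduct} and a Bony decomposition to make sense of products $D^{k_1}_{1\cdots m}D^{k_2}_{(m+1)\cdots n}$ with $|k_2|\ge\alpha_{(m+1)\cdots n}$, treating the second factor as a distribution of negative regularity. This presupposes that $D^{k_2}_{(m+1)\cdots n}$ in that range is a well-defined element of a precise negative-regularity Besov space, which is itself a claim requiring proof and is not part of the statement being proved. The paper sidesteps this entirely by never leaving the world of smooth level-$j$ pieces: $D^{k,j}$ and $C^{k,j}$ are built from the smooth $f^j$'s, so all products are classical, and the only analysis needed is the geometric decay in $j$ from Lemma \ref{lmm3}, summed under the non-integrality assumption \eqref{assump}. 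So while your high-level picture is correct, the argument as written has a real gap at its centre.
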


\begin{proof}
%We assume that $D_w^k$ is bounded and continuous for any sequence $w$ of length less than $n$ and any $|k|<\alpha_w$.
First we prove some identities.
For each $j\ge0$, we recursively define
\begin{align*}
D_{1\cdots n}^{k,j}&=\partial^kf_{1\cdots n}^j-\sum_{m=1}^{n-1}
\sum_{\substack{k_1+k_2=k \\ |k_1|<\alpha_{1\cdots m} \\ |k_2|\ge\alpha_{(m+1)\cdots n}}}\binom{k}{k_1}D_{1\cdots m}^{k_1}D_{(m+1)\cdots n}^{k_2,j},\\
C_{1\cdots n}^{k,j}&=
\partial^kf_{1\cdots n}^{<j}
-{\bf 1}_{|k|<\alpha_{1\cdots n}}D_{1\cdots n}^k
-\sum_{m=1}^{n-1}\sum_{\substack{k_1+k_2=k \\ |k_1|<\alpha_{1\cdots m}}}\binom{k}{k_1}D_{1\cdots m}^{k_1}C_{(m+1)\cdots n}^{k_2,j}.
\end{align*}
For simplicity, we write the definition of $C_{1\cdots n}^{k,j}$ as
$$
C_{1\cdots n}^{k,j}=
\partial^kf_{1\cdots n}^{<j}
-\sum_{m=1}^n\sum_{\substack{k_1+k_2=k \\ |k_1|<\alpha_{1\cdots m}}}\binom{k}{k_1}D_{1\cdots m}^{k_1}C_{(m+1)\cdots n}^{k_2,j}
$$
in short.
Then we can derive
\begin{align}\label{eq:WDofD}
\begin{aligned}
&D_{1\cdots n}^{k,j}
=\sum_{k=k_1+k_2}\binom{k}{k_1}C_{1\cdots(n-1)}^{k_1,j}\partial^{k_2}f_n^{j}\\
&+\sum_{r=2}^n(-1)^r\sum_{w_1,w_2,\dots,w_r}
\sum_{\substack{k_1+k_2+\cdots+k_r=k \\ |k_1|<\alpha_{w_1},\dots,\, |k_r|<\alpha_{w_r}}}
\frac{k!}{k_1!k_2!\cdots k_r!}D_{w_1}^{k_1}D_{w_2}^{k_2}\cdots D_{w_{r-1}}^{k_{r-1}}D_{w_r}^{k_r,j},
\end{aligned}
\end{align}
where $w_1,w_2,\dots,w_r$ runs over all partitions of the sequence $1\cdots n$,
that is, $w_i$ is a sequence from $n_{i-1}+1$ to $n_i$ for some $0=n_0< n_1<n_2<\cdots<n_r=n$.
To prove \eqref{eq:WDofD}, we transform the definition of $D_{1\cdots n}^{k,j}$ into
\begin{align}\label{eq:Ddecomp1}
\begin{aligned}
D_{1\cdots n}^{k,j}
&=\sum_{k_1+k_2=k}\binom{k}{k_1}\partial^{k_1}f_{1\cdots (n-1)}^{<j}\partial^{k_2}f_n^j
-\sum_{m=1}^{n-1}\sum_{\substack{k_1+k_2=k \\ |k_1|<\alpha_{1\cdots m}}}\binom{k}{k_1}D_{1\cdots m}^{k_1}D_{(m+1)\cdots n}^{k_2,j}\\
&\quad+\sum_{m=1}^{n-1}\sum_{\substack{k_1+k_2=k \\ |k_1|<\alpha_{1\cdots m},\, |k_2|<\alpha_{(m+1)\cdots n}}}\binom{k}{k_1}D_{1\cdots m}^{k_1}D_{(m+1)\cdots n}^{k_2,j}.
\end{aligned}
\end{align}
Note that we used Leibniz rule for $f_{1\cdots n}^j=f_{1\cdots(n-1)}^{<j}f_n^j$.
Then by using the decomposition \eqref{eq:WDofD} for $D_{(m+1)\cdots n}^{k_2,j}$ of the second term in the right-hand side of \eqref{eq:Ddecomp1} over $1\le m\le n-2$
(and $D_n^{k_2,j}=\partial^{k_2}f_n^j$ for $m=n-1$), we can obtain the same formula \eqref{eq:WDofD} for $D_{1\cdots n}^{k,j}$.

From \eqref{eq:WDofD}, we can recursively show that $\|D_{1\cdots n}^{k,j}\|_{L^\infty}\lesssim 2^{-j\theta_{n,k}}$ with some $\theta_{n,k}>0$ for any $|k|<\alpha_{1\cdots n}$, hence $D_{1\cdots n}^k:=\sum_{j=0}^\infty D_{1\cdots n}^{k,j}$ converges in the supremum norm.
Such an estimate is obtained from $\|\partial^kf_n^j\|_{L^\infty}\lesssim 2^{-j(\alpha_n-|k|)}\|\bsf{n}\|_{\alpha_n}$ and the next lemma.
\end{proof}

\begin{lmm}\label{lmm3}
$\|C_{1\cdots n}^{k,j}\|_{L^\infty}\lesssim2^{-j(\alpha_{1\cdots n}-|k|)}\|\bsf{1}\|_{\alpha_1}\cdots\|\bsf{n}\|_{\alpha_n}$.
\end{lmm}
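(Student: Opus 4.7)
The plan is to prove the bound by induction on $n$, in parallel with Lemma~\ref{lem:D^kisBC}.

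For the base case $n=1$, the definition reduces to $C_1^{k,j}=\partial^kf_1^{<j}-\mathbf{1}_{|k|<\alpha_1}D_1^k$. Bernstein's inequality gives $\|\partial^kf_1^i\|_{L^\infty}\lesssim 2^{-i(\alpha_1-|k|)}\|\bsf{1}\|_{\alpha_1}$. When $|k|<\alpha_1$, $C_1^{k,j}=-\sum_{i\geq j}\partial^kf_1^i$ is a convergent tail, and when $|k|>\alpha_1$, $C_1^{k,j}=\sum_{i<j}\partial^kf_1^i$ is a geometric sum dominated by its endpoint $i=j-1$; both yield the required bound, and equality $|k|=\alpha_1$ is excluded by \eqref{assump}.

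For the inductive step, apply Leibniz to $f_{1\cdots n}^i=f_{1\cdots(n-1)}^{<i}f_n^i$ to obtain
$$
\partial^kf_{1\cdots n}^{<j}=\sum_{k_1+k_2=k}\binom{k}{k_1}\sum_{i<j}\partial^{k_1}f_{1\cdots(n-1)}^{<i}\,\partial^{k_2}f_n^i,
$$
then substitute the inductive decomposition of $\partial^{k_1}f_{1\cdots(n-1)}^{<i}$ coming from the definition of $C_{1\cdots(n-1)}^{k_1,i}$, along with $\partial^{k_2}f_n^{<j}=C_n^{k_2,j}+\mathbf{1}_{|k_2|<\alpha_n}D_n^{k_2}$ wherever it arises, and iterate. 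In analogy with the derivation of \eqref{eq:WDofD} for $D_{1\cdots n}^{k,j}$, the $D$-type contributions produced by this iteration reconstitute exactly the subtractions $\mathbf{1}_{|k|<\alpha_{1\cdots n}}D_{1\cdots n}^k+\sum_{m=1}^{n-1}\sum_{|k_1|<\alpha_{1\cdots m}}\binom{k}{k_1}D_{1\cdots m}^{k_1}C_{(m+1)\cdots n}^{k_2,j}$ appearing in the definition of $C_{1\cdots n}^{k,j}$ (with $D_{1\cdots n}^k$ itself unfolded via the WDofD identity for its constituents $D_{1\cdots n}^{k,i}$), and these cancel. Residual ``constant'' terms with no $i$-dependence either cancel against the $D_{1\cdots n}^k$ subtraction when $|k|<\alpha_{1\cdots n}$ or vanish automatically when $|k|>\alpha_{1\cdots n}$ because the constraints $\sum|k_s|<\sum\alpha_{w_s}$ become infeasible. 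What survives is a finite linear combination of residuals of the form
$$
\prod_{s=1}^{r-1}D_{w_s}^{k_s}\cdot\sum_{i\in I}C_{w_r}^{k_r,i}\,\partial^{k_{r+1}}f_n^i,
$$
indexed by ordered partitions $(w_1,\ldots,w_r)$ of $1\cdots(n-1)$ into consecutive blocks and multi-indices $k_1+\cdots+k_{r+1}=k$ with $|k_s|<\alpha_{w_s}$ for $s<r$, where $I$ is either $\{i<j\}$ or $\{i\geq j\}$ according to the derivation.

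Each residual is then controlled as follows. Lemma~\ref{lem:D^kisBC} (for blocks shorter than $n$) bounds $\|D_{w_s}^{k_s}\|_{L^\infty}\lesssim\prod_{t\in w_s}\|\bsf{t}\|_{\alpha_t}$, the inductive hypothesis bounds $\|C_{w_r}^{k_r,i}\|_{L^\infty}\lesssim 2^{-i(\alpha_{w_r}-|k_r|)}\prod_{t\in w_r}\|\bsf{t}\|_{\alpha_t}$, and Bernstein bounds $\|\partial^{k_{r+1}}f_n^i\|_{L^\infty}\lesssim 2^{-i(\alpha_n-|k_{r+1}|)}\|\bsf{n}\|_{\alpha_n}$. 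The combined $i$-exponent computes to $\theta=\alpha_{1\cdots n}-|k|+\sum_{s<r}(|k_s|-\alpha_{w_s})$, which is nonzero because $w_r\cup\{n\}$ is a consecutive block ending at $n$ and \eqref{assump} forces $\alpha_{w_r\cup\{n\}}\notin\bbN$; the geometric series over $I$ is therefore dominated by its endpoint at $i=j$, producing the claimed rate $2^{-j(\alpha_{1\cdots n}-|k|)}\prod_{t=1}^n\|\bsf{t}\|_{\alpha_t}$. The main obstacle is the algebraic bookkeeping that certifies the cancellations -- verifying that the nested substitution of the recursive definitions of each $C_w^{k',i}$ reproduces exactly the prescribed subtraction block and leaves only the residuals above, and keeping track of whether each intermediate sum is a partial sum $f^{<j}$ or a full sum so that the right $I$ is matched to the sign of $\theta$; this is formally parallel to the derivation of \eqref{eq:WDofD} for $D_{1\cdots n}^{k,j}$.
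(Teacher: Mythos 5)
Your base case and overall plan (induction on $n$, Leibniz expansion, substitute the inductive decompositions, track cancellations) match the paper's, and your accounting of the $i$-exponent $\theta=\alpha_{1\cdots n}-|k|+\sum_{s<r}(|k_s|-\alpha_{w_s})$ is arithmetically correct. But there is a real gap at the last step, and it is not merely ``bookkeeping to be filled in.''

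Your claimed surviving residuals include terms with $r\ge2$, carrying a nonempty prefactor $\prod_{s<r}D_{w_s}^{k_s}$. For those, since each $|k_s|<\alpha_{w_s}$ for $s<r$, you have $\theta<\alpha_{1\cdots n}-|k|$ strictly. The geometric sum over $I$ then yields $2^{-j\theta}$, \emph{not} $2^{-j(\alpha_{1\cdots n}-|k|)}$, and since $\theta<\alpha_{1\cdots n}-|k|$ this is a strictly weaker decay in $j$; the bound you actually get does not imply the lemma. The sentence asserting that this ``produc[es] the claimed rate $2^{-j(\alpha_{1\cdots n}-|k|)}$'' is where the argument fails. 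The only residual for which $\theta$ equals $\alpha_{1\cdots n}-|k|$ is the one with $r=1$ (no $D$-factors at all). So for the proof to close, you must show that every $r\ge2$ contribution cancels exactly, i.e.\ that
$$
C_{1\cdots n}^{k,j}
=\pm\sum_{k_1+k_2=k}\binom{k}{k_1}\sum_{i\in I}C_{1\cdots(n-1)}^{k_1,i}\,\partial^{k_2}f_n^i
$$
with no $D$-laden remainders. This is precisely the identity \eqref{eq:explicitC} that the paper establishes (via a careful split into the five terms $(A),(B),(C)_m,(D)_m,(E)_m$ and the formula \eqref{eq:WDofD}); it is the substantive content of the proof, not a routine parallel to the derivation of \eqref{eq:WDofD}. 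As written, your argument either proves the wrong thing (if the $r\ge2$ residuals genuinely remain, the estimate is too weak) or silently assumes the key cancellation (if they don't, you must prove \eqref{eq:explicitC}).
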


\begin{proof}
We assume that the estimate holds for $C_{1\cdots m}^{k,j}$ with $m<n$.
Then it is sufficient to show the formula
\begin{align}\label{eq:explicitC}
C_{1\cdots n}^{k,j}
=
\left\{
\begin{aligned}
&-\sum_{k=k_1+k_2}\binom{k}{k_1}\sum_{i\ge j}C_{1\cdots(n-1)}^{k_1,i}\partial^{k_2}f_n^i,\qquad&|k|<\alpha_{1\cdots n},\\
&\sum_{k=k_1+k_2}\binom{k}{k_1}\sum_{i<j}C_{1\cdots(n-1)}^{k_1,i}\partial^{k_2}f_n^i,\qquad&|k|\ge\alpha_{1\cdots n}.
\end{aligned}
\right.
\end{align}
From this formula and the assumption \eqref{assump}, we can recursively prove the required estimate.

We prove \eqref{eq:explicitC}.
If $|k|\ge\alpha_{1\cdots n}$, since ${\bf 1}_{|k|<\alpha_{1\cdots n}}D_{1\cdots n}^k=0$ we obtain
\begin{align*}
C_{1\cdots n}^{k,j}
&=\partial^kf_{1\cdots n}^{<j}-\sum_{m=1}^{n-1}\sum_{\substack{k_1+k_2=k \\ |k_1|<\alpha_{1\cdots m}}}\binom{k}{k_1}D_{1\cdots m}^{k_1}C_{(m+1)\cdots n}^{k_2,j}\\
&=\sum_{i<j}\Big\{\partial^k(f_{1\cdots (n-1)}^{<i}f_n^i)-\sum_{m=1}^{n-1}\sum_{\substack{k_1+k_2+k_3=k \\ |k_1|<\alpha_{1\cdots m}}}\frac{k!}{k_1!k_2!k_3!}D_{1\cdots m}^{k_1}C_{(m+1)\cdots(n-1)}^{k_2,i}\partial^{k_3}f_n^i\Big\}\\
&=\sum_{i<j}\sum_{k=k_1+k_2}\binom{k}{k_1}C_{1\cdots(n-1)}^{k_1,i}\partial^{k_2}f_n^i.
\end{align*}
In the second equality, we used the latter part of \eqref{eq:explicitC} for $C_{(m+1)\cdots n}^{k_2,j}$, since the conditions on $k$ and $k_1$ imply $|k_2|>\alpha_{(m+1)\cdots n}$.
To consider the case $|k|<\alpha_{1\cdots n}$, we decompose
\begin{align*}
C_{1\cdots n}^{k,j}
&=\partial^kf_{1\cdots n}^{<j}-D_{1\cdots n}^k-\sum_{m=1}^{n-1}\sum_{\substack{k_1+k_2=k \\ |k_1|<\alpha_{1\cdots m}}}\binom{k}{k_1}D_{1\cdots m}^{k_1}C_{(m+1)\cdots n}^{k_2,j}\\
&=\partial^kf_{1\cdots n}^{<j}-\partial^kf_{1\cdots n}
+\sum_{m=1}^{n-1}\sum_{\substack{k_1+k_2=k \\ |k_1|<\alpha_{1\cdots m} \\ |k_2|\ge\alpha_{(m+1)\cdots n}}}
\binom{k}{k_1}D_{1\cdots m}^{k_1}D_{(m+1)\cdots n}^{k_2}\\
&\quad-\sum_{m=1}^{n-1}\sum_{\substack{k_1+k_2=k \\ |k_1|<\alpha_{1\cdots m} \\ |k_2|<\alpha_{(m+1)\cdots n}}}\binom{k}{k_1}D_{1\cdots m}^{k_1}C_{(m+1)\cdots n}^{k_2,j}\\
&\quad-\sum_{m=1}^{n-1}\sum_{\substack{k_1+k_2=k \\ |k_1|<\alpha_{1\cdots m} \\ |k_2|\ge\alpha_{(m+1)\cdots n}}}\binom{k}{k_1}D_{1\cdots m}^{k_1}C_{(m+1)\cdots n}^{k_2,j}.
\end{align*}
We write five terms as $(A)+(B)+\sum_{m=1}^{n-1}(C)_m+\sum_{m=1}^{n-1}(D)_m+\sum_{m=1}^{n-1}(E)_m$ in order.
First, $(A)+(B)=-\partial^kf_{1\cdots n}^{\ge j}$ in distributional sense.
Here and in what follows, we write $a^{\ge j}:=\sum_{i\ge j}a^i$ for any summable sequences $\{a^i\}_{i\in\bbN}$.
For $(D)$, by using the former part of \eqref{eq:explicitC} for $C_{(m+1)\cdots n}^{k_2,j}$, we have
$$
(D)_m=\sum_{\substack{k_1+k_2+k_3=k \\ |k_1|<\alpha_{1\cdots m} \\ |k_2+k_3|<\alpha_{(m+1)\cdots n}}}
\frac{k!}{k_1!k_2!k_3!}
D_{1\cdots m}^{k_1}\big(C_{(m+1)\cdots(n-1)}^{k_2,(\cdot)}\partial^{k_3}f_n^{(\cdot)}\big)^{\ge j}.
$$
For $(C)$, by the formula \eqref{eq:WDofD}, we have
$$
(C)_m
=\sum_{\substack{k_1+k_2+k_3=k \\ |k_1|<\alpha_{1\cdots m} \\ |k_2+k_3|\ge\alpha_{(m+1)\cdots n}}}
\frac{k!}{k_1!k_2!k_3!}D_{1\cdots m}^{k_1}
\sum_{j=0}^\infty C_{(m+1)\cdots (n-1)}^{k_2,j}\partial^{k_3}f_n^j.
$$
Note that the second term of \eqref{eq:WDofD} does not appear in this case, since $|k_2|\ge\alpha_{(m+1)\cdots n}$.
Hence by the latter part of \eqref{eq:explicitC}, we have
\begin{align*}
(C)_m+(E)_m
&=\sum_{\substack{k_1+k_2+k_3=k \\ |k_1|<\alpha_{1\cdots m} \\ |k_2+k_3|\ge\alpha_{(m+1)\cdots n}}}
\frac{k!}{k_1!k_2!k_3!}
D_{1\cdots m}^{k_1}\big(C_{(m+1)\cdots(n-1)}^{k_2,(\cdot)}\partial^{k_3}f_n^{(\cdot)}\big)^{\ge j}.
\end{align*}
Consequently, we can write $C_{1\cdots n}^{k,j}=-F^{\ge j}$ with
\begin{align*}
F^j&=\partial^kf_{1\cdots n}^j-\sum_{m=1}^{n-1}\sum_{\substack{k_1+k_2+k_3=k \\ |k_1|<\alpha_{1\cdots m} }}
\frac{k!}{k_1!k_2!k_3!}
D_{1\cdots m}^{k_1}C_{(m+1)\cdots(n-1)}^{k_2,j}\partial^{k_3}f_n^j\\
&=\sum_{k=k_1+k_2}\binom{k}{k_1}C_{1\cdots(n-1)}^{k_1,j}\partial^{k_2}f_n^j,
\end{align*}
which completes the proof of \eqref{eq:explicitC}.
\end{proof}

We return to Definition \ref{dfn:Omega}.
The two-parameter function $\Omega_{1\cdots n}$ turns out to have a role of Taylor remainder.

\begin{thm}\label{thm:abstractparaproduct}
Under the assumption \eqref{assump}, for any $n\in\bbN$,
$$
\sup_{x,y\in\bbR^d}\frac{|\Omega_{1\cdots n}(y,x)|}{|y-x|^{\alpha_{1\cdots n}}}
\lesssim\|\bsf{1}\|_{\alpha_1}\cdots\|\bsf{n}\|_{\alpha_n}.
$$
\end{thm}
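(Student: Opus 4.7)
I would proceed by induction on $n$. The base case $n=1$ is immediate: Definition~\ref{dfn:Omega} gives $D_1^k=\partial^k f_1$ (the sum over $m$ is empty), so $\Omega_1(y,x)=f_1(y)-\sum_{|k|<\alpha_1}\frac{(y-x)^k}{k!}\partial^k f_1(x)$. Since $f_1=\sum\bsf{1}\in B^{\alpha_1}_{\infty,\infty}$ with norm bounded by $\|\bsf{1}\|_{\alpha_1}$, the estimate reduces to the classical Besov--Taylor bound \eqref{1:def:Omegaalpha}.

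For the inductive step, fix $x,y\in\bbR^d$ and set $h=|y-x|$; pick $j_0$ with $2^{-j_0}\sim h$. The starting point is the product decomposition
\begin{equation*}
f_{1\cdots n}(y)=\sum_{j\ge 0}f^{<j}_{1\cdots(n-1)}(y)\,f^j_n(y).
\end{equation*}
I would Taylor-expand each factor around $x$ up to orders $\lfloor\alpha_{1\cdots(n-1)}\rfloor$ and $\lfloor\alpha_n\rfloor$ respectively, and then substitute the definition of $C^{k_1,j}_{1\cdots(n-1)}$, which for $|k_1|<\alpha_{1\cdots(n-1)}$ reads
\begin{equation*}
\partial^{k_1}f^{<j}_{1\cdots(n-1)}(x)=D^{k_1}_{1\cdots(n-1)}(x)+C^{k_1,j}_{1\cdots(n-1)}(x)+\sum_{m=1}^{n-2}\sum_{\substack{k_1'+k_1''=k_1\\ |k_1'|<\alpha_{1\cdots m}}}\binom{k_1}{k_1'}D^{k_1'}_{1\cdots m}(x)\,C^{k_1'',j}_{(m+1)\cdots(n-1)}(x).
\end{equation*}
The recursive identity \eqref{eq:WDofD} for $D^{k,j}_{1\cdots n}$ should then allow one to recognize the $C^{k',j}_w\,\partial^{k''}f_n^j$ combinations, summed over $j$, as pieces of $D^k_{1\cdots n}(x)$.

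Collecting terms by prefactor, the contributions clustered around $D^{k'}_{1\cdots m}(x)$ should reassemble, for $m=n$, into $T_{1\cdots n}(y,x)$ via the recursion \eqref{eq:WDofD}, and for $1\le m\le n-1$, into $T_{1\cdots m}(y,x)\,\Omega_{(m+1)\cdots n}(y,x)$ after applying the inductive hypothesis to the shorter subsequence $(m+1)\cdots n$. The residual terms are bounded using Lemma~\ref{lmm3} (which gives $\|C^{k,j}_w\|_\infty\lesssim 2^{-j(\alpha_w-|k|)}$), Bernstein's inequality $\|\partial^{k_2}f^j_n\|_\infty\lesssim 2^{j(|k_2|-\alpha_n)}\|\bsf{n}\|_{\alpha_n}$, and the standard pointwise Taylor-remainder estimates for spectrally localized smooth functions. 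Splitting the sum at $j_0$ and summing the two resulting geometric series yields the desired bound $h^{\alpha_{1\cdots n}}$; the non-integrality assumption \eqref{assump} is what makes every exponent strictly nonzero so that the geometric series converge in the correct direction.

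The main obstacle will be the combinatorial bookkeeping in the reassembly step: verifying that the double Taylor expansion, after inserting the $C$-identity, regroups exactly into the prescribed form $T_{1\cdots n}+\sum_{m=1}^{n-1}T_{1\cdots m}\,\Omega_{(m+1)\cdots n}$ modulo a remainder of size $h^{\alpha_{1\cdots n}}$. This requires simultaneously handling the recursive definitions of $D^k$, $T$, and $\Omega$, and respecting the sharp cutoffs $|k|<\alpha_w$ versus $|k|\ge\alpha_w$ imposed by the non-integrality of every partial sum $\alpha_w$; the inductive hypothesis must be applied coherently to every proper subsequence $(m+1)\cdots n$, and ensuring consistency across all these applications is the delicate point of the argument.
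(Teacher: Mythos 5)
Your outline reaches for the right ingredients---the product structure $f_{1\cdots n}^j=f_{1\cdots(n-1)}^{<j}f_n^j$, the $C$-identity, Lemma~\ref{lmm3}, the reassembly into $T_{1\cdots m}\Omega_{(m+1)\cdots n}$, and the split at $2^{-j_0}\sim|y-x|$---and the $n=1$ base case is fine. But the expansion step as written has a real gap, not just bookkeeping. Taylor-expanding the two factors to \emph{fixed} orders does not allow the $j$-sum to close. After substituting the $C$-identity, the genuine remainder pieces carry the factor $C^{k,j}_{1\cdots(n-1)}(x)$, of size $\lesssim 2^{-j(\alpha_{1\cdots(n-1)}-|k|)}$ by Lemma~\ref{lmm3}, times the block Taylor remainder of $f_n^j$ to a fixed order $r$, of size $\lesssim 2^{j(r-\alpha_n)}|y-x|^r$. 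The $j$-exponent is $(|k|+r)-\alpha_{1\cdots n}$. Whenever $|k|+r<\alpha_{1\cdots n}$ this exponent is negative, so the sum over $j<j_0$ is dominated by $j=0$ and only produces $|y-x|^{|k|+r}$, which is strictly worse than $|y-x|^{\alpha_{1\cdots n}}$. Taking, say, $\alpha_1=1.6$, $\alpha_2=1.5$, $n=2$, $r=[\alpha_2]+1=2$: both admissible $|k|\in\{0,1\}$ satisfy $|k|+r<\alpha_{12}=3.1$, so every cross term fails. Raising $r$ so that $|k|+r>\alpha_{1\cdots n}$ for all $|k|$ fixes the $j<j_0$ half but then breaks the $j\ge j_0$ half for large $|k|$ (there the best bound on $\Omega^r(f_n^j)$ is $\sim 2^{j(r-1-\alpha_n)}|y-x|^{r-1}$, and $|k|+r-1>\alpha_{1\cdots n}$ makes that sum diverge). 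No fixed $r$ works uniformly in $k$.

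The device that is missing is the one encoded in the paper's Lemma~\ref{lmm2}: pair the degree-$k$ polynomial piece of $f_{1\cdots(n-1)}^{<j}$ with the remainder $\Omega^{\theta-|k|,j}_n$ of $f_n^j$, so that the \emph{total} order $\theta$ is the same across all $k$. Then every cross term has the uniform size $\lesssim 2^{-j(\alpha_{1\cdots n}-\theta)}|y-x|^\theta$, and letting $\theta$ run through a punctured neighbourhood of $\alpha_{1\cdots n}$ controls the $j$-sum on both sides of $j_0$ (or one invokes the summation lemma directly). This is why the paper carries the free parameter $\theta$ throughout via $\Omega^{\theta,j}_{1\cdots n}$, and why the inductive hypothesis must be the blockwise $\theta$-dependent bound \eqref{ineq:thetadependent}, valid for all non-integer $\theta>\alpha_{1\cdots(n-1)}\vee[\alpha_{1\cdots n}]$, rather than the summed estimate on $\Omega_{(m+1)\cdots n}$ alone: the term $\Omega^{\theta,<j}_{1\cdots(n-1)}(\cdot,x)f_n^j(\cdot)$ in the recursion is a partial sum of blockwise remainders and needs a blockwise, $\theta>\alpha_{1\cdots(n-1)}$ bound, which a bound on the full sum $\Omega_{1\cdots(n-1)}$ does not give. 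Once this parametrization is in place, the regrouping you gesture at becomes the clean algebraic identity \eqref{eq:lmm2}, and the remainder estimate follows by the short induction in the paper's proof of Theorem~\ref{thm:abstractparaproduct}.
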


The main part of the proof is the decomposition of $\Omega_{1\cdots n}$ into good pieces.
For any $\theta\in\bbR$, define 
\begin{align*}
T_{1\cdots n}^{\theta,j}(\cdot,x)&=\sum_{|k|<\theta}\frac{(\cdot-x)^k}{k!}D_{1\cdots n}^{k,j}(x),\\
\Omega_{1\cdots n}^{\theta,j}(\cdot,x)&=f_{1\cdots n}^j(x)-T_{1\cdots n}^{\theta,j}(\cdot,x)-\sum_{m=1}^{n-1}T_{1\cdots m}(\cdot,x)\Omega_{(m+1)\cdots n}^{\alpha_{(m+1)\cdots n},j}(\cdot,x).
\end{align*}
Then $\Omega_{1\cdots n}=\sum_{j=0}^\infty\Omega_{1\cdots n}^{\alpha_{1\cdots n},j}$.
Note that $\Omega_{1\cdots n}^{\theta,j}=\Omega_{1\cdots n}^{\alpha_{1\cdots n},j}$ for any $\theta$ with $[\alpha_{1\cdots n}]<\theta\le[\alpha_{1\cdots n}]+1$.

\begin{lmm}\label{lmm2}
Let $n\ge2$.
For any $\theta>[\alpha_{1\cdots n}]$ and $j$,
\begin{align}\label{eq:lmm2}
\Omega_{1\cdots n}^{\theta,j}(\cdot,x)
=\Omega_{1\cdots(n-1)}^{\theta,<j}(\cdot,x)f_n^j(\cdot)
+\sum_{|k|<\theta}\frac{(\cdot-x)^k}{k!}C_{1\cdots(n-1)}^{k,j}(x)\Omega_n^{\theta-|k|,j}(\cdot,x).
\end{align}
\end{lmm}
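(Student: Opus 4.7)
My plan is to substitute the product structure $f_{1\cdots n}^j=f_{1\cdots(n-1)}^{<j}f_n^j$ into the definition of $\Omega_{1\cdots n}^{\theta,j}$, using a Leibniz--Taylor identity for the product, and then to convert bare derivatives into the renormalized quantities $D_{\bullet}^{\bullet}$ and $C_{\bullet}^{\bullet}$ via their defining recursions. Since the definition of $\Omega_{1\cdots n}^{\theta,j}$ already involves $\Omega_{(m+1)\cdots n}^{\alpha_{(m+1)\cdots n},j}$ for $m=1,\dots,n-1$, I would proceed by induction on $n$, so that each such inner object can itself be re-expanded by the present lemma applied to the shorter sequence $(m+1)\cdots n$.

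\textbf{Key steps.} First, I would record the elementary Leibniz--Taylor identity
\begin{equation*}
f(y)g(y)-\sum_{|k|<\theta}\frac{(y-x)^k}{k!}\partial^k(fg)(x)=R^\theta_f(y,x)g(y)+\sum_{|k|<\theta}\frac{(y-x)^k}{k!}\partial^kf(x)R^{\theta-|k|}_g(y,x),
\end{equation*}
where $R^\theta_h(y,x):=h(y)-\sum_{|k|<\theta}\frac{(y-x)^k}{k!}\partial^k h(x)$; this is a direct consequence of $g(y)=T^{\theta-|k|}_g(y,x)+R^{\theta-|k|}_g(y,x)$ combined with the classical Leibniz rule for $\partial^k(fg)$. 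Applying it to $f=f_{1\cdots(n-1)}^{<j}$, $g=f_n^j$, and using $R^{\theta-|k|}_{f_n^j}=\Omega_n^{\theta-|k|,j}$ (since $D_n^{k,j}=\partial^k f_n^j$), I obtain the skeleton of the right-hand side of \eqref{eq:lmm2}. Then I make three successive conversions: rewrite the standard Taylor polynomial of $f_{1\cdots n}^j$ as $T_{1\cdots n}^{\theta,j}$ plus cross terms using the defining identity of $D_{1\cdots n}^{k,j}$; rewrite $R^\theta_{f_{1\cdots(n-1)}^{<j}}$ in terms of $\Omega_{1\cdots(n-1)}^{\theta,<j}$ and lower-order $T$--$\Omega$ products by summing the defining formula of $\Omega_{1\cdots(n-1)}^{\theta,i}$ over $i<j$; and rewrite $\partial^k f_{1\cdots(n-1)}^{<j}$ as $C_{1\cdots(n-1)}^{k,j}$ plus the corrections prescribed by the defining relation of $C$. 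After these substitutions, the two targeted terms on the right-hand side of \eqref{eq:lmm2} appear directly, and what remains is a collection of cross terms to be cancelled against $\sum_{m=1}^{n-1}T_{1\cdots m}(y,x)\Omega_{(m+1)\cdots n}^{\alpha_{(m+1)\cdots n},j}(y,x)$.

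\textbf{Cancellation and main obstacle.} To complete the cancellation, I would invoke the induction hypothesis on each $\Omega_{(m+1)\cdots n}^{\alpha_{(m+1)\cdots n},j}$ to expose its own $D$- and $C$-recursions for the sub-sequence $(m+1)\cdots n$; the resulting terms are indexed by the same partitions $k=k_1+k_2$ as the corrections generated by the three conversions above, and combine term by term. The principal difficulty is purely combinatorial: the $D$-recursion for the full sequence, the $C$-recursion combined with the summed-over-$i<j$ expansion of $\Omega_{1\cdots(n-1)}^{\theta,<j}$, and the induction hypothesis on each shorter $(m+1)\cdots n$ all generate cross terms living in overlapping double sums over partitions and splits of multi-indices, and a careful reorganization is needed to verify identical cancellation. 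The precise matching of the ``high'' conditions $|k_2|\geq\alpha_{(m+1)\cdots n}$ in the $D$-recursion against the ``low'' conditions $|k_2|<\alpha_{(m+1)\cdots(n-1)}$ in the $C$-recursion, which underlies formula \eqref{eq:WDofD} in the proof of Lemma \ref{lem:D^kisBC}, is what makes this possible; the hypothesis $\theta>[\alpha_{1\cdots n}]$ is used crucially, as it guarantees that the set $\{(k_1,k_2):|k_1|<\alpha_{1\cdots m},\,|k_2|<\alpha_{(m+1)\cdots n},\,|k_1|+|k_2|\geq\theta\}$ is empty, so no spurious boundary terms survive. The base case $n=2$ reduces to a direct application of the Leibniz--Taylor identity together with the explicit formula $D_{12}^{k,j}=\sum_{k_1+k_2=k}\binom{k}{k_1}C_1^{k_1,j}\partial^{k_2}f_2^j+\sum_{|k_1|<\alpha_1,\,|k_2|<\alpha_2}\binom{k}{k_1}\partial^{k_1}f_1\,\partial^{k_2}f_2^j$.
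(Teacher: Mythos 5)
Your proposal follows essentially the same route as the paper: the same Leibniz--Taylor identity for $\Omega^\theta(fg)$, induction on $n$ to re-expand the inner $\Omega_{(m+1)\cdots n}^{\alpha_{(m+1)\cdots n},j}$, and cancellation driven by the defining recursions for $D$ and $C$ (the paper packages your ``three conversions'' as an intermediate identity, its Eq.~(2.7)/(\ref{eq:lmm1}), before applying Leibniz--Taylor, but the substance is identical). Your observation about where $\theta>[\alpha_{1\cdots n}]$ enters is correct and matches the paper's use of $\theta-|k|>[\alpha_{(m+1)\cdots n}]$, and your explicit $n=2$ formula for $D_{12}^{k,j}$ is the $n=2$ case of Eq.~(\ref{eq:WDofD}).
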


\begin{proof}
Recall that we defined the notation
$$
\Omega^\theta(f)(\cdot,x):=f(\cdot)-\sum_{|k|<\theta}\frac{(\cdot-x)^k}{k!}\partial^kf(x)
$$
in \eqref{1:def:Omegaalpha}.
Before proving \eqref{eq:lmm2}, we show the intermediate formula
\begin{align}\label{eq:lmm1}
\Omega_{1\cdots n}^{\theta,j}(\cdot,x)
=\Omega^\theta(f_{1\cdots n}^j)(\cdot,x)
-\sum_{m=1}^{n-1}\sum_{|k|<\alpha_{1\cdots m}}\frac{(\cdot-x)^k}{k!}D_{1\cdots m}^k(x)\Omega_{(m+1)\cdots n}^{\theta-|k|,j}(\cdot,x)
\end{align}
for any $\theta>[\alpha_{1\cdots n}]$. This formula is used in the next step.
By definition of $\Omega^{\theta,j}$, the identity
$$
\Omega_{(m+1)\cdots n}^{\theta-|k|,j}(\cdot,x)=\Omega_{(m+1)\cdots n}^{\alpha_{(m+1)\cdots n},j}(\cdot,x)-\sum_{\alpha_{(m+1)\cdots n}\le|\ell|<\theta-|k|}\frac{(\cdot-x)^\ell}{\ell!}D_{(m+1)\cdots n}^{\ell,j}(x)
$$
holds since $\theta-|k|>[\alpha_{(m+1)\cdots n}]$.
By inserting it into the right-hand side of \eqref{eq:lmm1} and using the definition of $D_{1\cdots n}^{k,j}$, we obtain the identity \eqref{eq:lmm1}.

Next we prove \eqref{eq:lmm2} from \eqref{eq:lmm1} by an induction for $n$.
In general, an elementary computation with Leibniz rule yields that for any smooth $f$ and $g$,
\begin{align*}
\Omega^\theta(fg)(\cdot,x)
=\Omega^\theta(f)(\cdot,x)g(\cdot)
+\sum_{|k|<\theta}\frac{(\cdot-x)^k}{k!}\partial^kf(x)\Omega^{\theta-|k|}(g)(\cdot,x).
\end{align*}
We write the right-hand side of \eqref{eq:lmm1} as $({\rm I})-\sum_{m=1}^{n-1}({\rm I\!I})_m$ in short.
For $({\rm I})$, since $f_{1\cdots n}^j=f_{1\cdots(n-1)}^{<j}f_n^j$,
\begin{align*}
({\rm I})
&=\Omega^\theta(f_{1\cdots(n-1)}^{<j})(\cdot,x)f_n^j(\cdot)
+\sum_{|k|<\theta}\frac{(\cdot-x)^k}{k!}\partial^kf_{1\cdots(n-1)}^{<j}(x)\Omega_n^{\theta-|k|,j}(\cdot,x)\\
&=:({\rm I})_1+({\rm I})_2.
\end{align*}
For $({\rm I\!I})_m$ with $m\le n-2$, by assuming \eqref{eq:lmm2} for $\Omega_{(m+1)\cdots n}^{\theta-|k|,j}$ by induction,
\begin{align*}
({\rm I\!I})_m
&=\sum_{|k|<\alpha_{1\cdots m}}\frac{(\cdot-x)^k}{k!}D_{1\cdots m}^k(x)\Omega_{(m+1)\cdots(n-1)}^{\theta-|k|,<j}(\cdot,x)f_n^j(\cdot)\\
&\quad+\sum_{|k|<\alpha_{1\cdots m},\, |\ell|<\theta-|k|}
\frac{(\cdot-x)^{k+\ell}}{k!\ell!}D_{1\cdots m}^k(x)C_{(m+1)\cdots(n-1)}^{\ell,j}(x)\Omega_n^{\theta-|k+\ell|,j}(\cdot,x)\\
&=:({\rm I\!I})_{m,1}+({\rm I\!I})_{m,2}.
\end{align*}
Then $({\rm I})_1-\sum_{m=1}^{n-2}({\rm I\!I})_{m,1}$ and $({\rm I})_2-({\rm I\!I})_{n-1}-\sum_{m=1}^{n-2}({\rm I\!I})_{m,2}$ respectively produces the first and second terms of the right-hand side of \eqref{eq:lmm2}.
\end{proof}

\begin{proof}[Proof of Theorem \ref{thm:abstractparaproduct}]
We inductively show that
\begin{align}\label{ineq:thetadependent}
|\Omega_{1\cdots n}^{\theta,j}(y,x)|\lesssim2^{-j(\alpha_{1\cdots n}-\theta)}|y-x|^\theta
\end{align}
for any non-integer $\theta>\alpha_{1\cdots(n-1)}\vee[\alpha_{1\cdots n}]$.
For $n=1$ and non-integer $\theta>0$, two Taylor expansions up to order $[\theta]$ and $[\theta]-1$ yield the estimates
\begin{align*}
|\Omega_1^{\theta,j}(y,x)|=|\Omega^\theta(f^j)(y,x)|
&\lesssim\sum_{|k|=[\theta]+1}\|\partial^kf^j\|_{L^\infty(\mathbb{R}^d)}|y-x|^{|k|}\\
&\lesssim2^{-j(\alpha_1-[\theta]-1)}|y-x|^{[\theta]+1}
\end{align*}
and
\begin{align*}
|\Omega_1^{\theta,j}(y,x)|=\Big|\Omega^{\theta-1}(f^j)(y,x)-\sum_{|k|=[\theta]}(\cdots)\Big|
\lesssim2^{-j(\alpha_1-[\theta])}|y-x|^{[\theta]}.
\end{align*}
By an interpolation, we have 
\begin{align}\label{2:ineq:Omega1Taylor}
|\Omega_1^{\theta,j}(y,x)|\lesssim2^{-j(\alpha_1-\theta)}|y-x|^\theta
\end{align}
For $n\ge2$, we use Lemma \ref{lmm2}.
The second term of the right-hand side of \eqref{eq:lmm2} is bounded by
$$
\sum_{|k|<\theta}|y-x|^{|k|}2^{-j(\alpha_{1\cdots(n-1)}-|k|)}2^{-j(\alpha_n-\theta+|k|)}|y-x|^{\theta-|k|}
\lesssim 2^{-j(\alpha_{1\cdots n}-\theta)}|y-x|^\theta,
$$
%$2^{-j(\alpha_{1\cdots n}-\theta)}|y-x|^\theta$,
by Lemma \ref{lmm3} and the estimate \eqref{2:ineq:Omega1Taylor} of the Taylor remainder for $f_n^j$.
The first term is also bounded by
$$
\sum_{i<j}2^{-i(\alpha_{1\cdots (n-1)}-\theta)}|y-x|^\theta\cdot2^{-j\alpha_n}\lesssim2^{-j(\alpha_{1\cdots n}-\theta)}|y-x|^\theta,
$$
by an inductive assumption \eqref{ineq:thetadependent} on $\Omega_{1\cdots(n-1)}^{\theta,j}$, and since $\theta>\alpha_{1\cdots(n-1)}$.

Since $\Omega_{1\cdots n}^{\theta,j}=\Omega_{1\cdots n}^{\alpha_{1\cdots n},j}$ for any $\theta$ in a small neighborhood of $\alpha_{1\cdots n}$, the next lemma yields Theorem \ref{thm:abstractparaproduct}.
\end{proof}

\begin{lmm}[{\cite[Lemma 3.7]{Hos20}, \cite[Lemma 2.5]{Hos21}}]
Let $\{\Omega^j\}_{j\ge0}$ be a family of functions on $\bbR^d\times\bbR^d$ such that, for some $C>0$ and $\alpha>0$, the bound
$$
|\Omega^j(y,x)|\le C2^{j(\theta-\alpha)}|y-x|^\theta,\qquad x,y\in\bbR^d
$$
holds for any $\theta$ in a neighborhood of $\alpha$. Then $\Omega(y,x)=\sum_{j=0}^\infty\Omega^j(y,x)$ absolutely converges and
$$
|\Omega(y,x)|\lesssim C|y-x|^\alpha,\qquad x,y\in\bbR^d.
$$
\end{lmm}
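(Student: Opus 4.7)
The plan is to exploit the two-sided interpolation built into the hypothesis: within the neighborhood of $\alpha$ we can pick exponents $\theta_-<\alpha<\theta_+$ (say $\theta_\pm=\alpha\pm\varepsilon$ for small $\varepsilon>0$). This gives two competing estimates on $|\Omega^j(y,x)|$---one with geometric decay in $j$ (from $\theta_-$, since $\theta_--\alpha<0$) and one that pays a geometric factor in $j$ to gain a higher power of $|y-x|$ (from $\theta_+$). Absolute convergence of $\sum_j\Omega^j(y,x)$ for each fixed $(y,x)$ follows immediately from the $\theta_-$ bound alone, since that makes the series majorized by a geometric series with ratio $2^{\theta_--\alpha}<1$.

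To obtain the pointwise bound $|\Omega(y,x)|\lesssim C|y-x|^\alpha$ I would balance the two estimates at a cutoff $J$ chosen as a function of $r:=|y-x|$. Assume first $r\le 1$ and pick $J\in\bbN$ with $2^{-J}\le r<2^{-J+1}$. Split
\[
\sum_{j=0}^\infty|\Omega^j(y,x)|=\sum_{j<J}|\Omega^j(y,x)|+\sum_{j\ge J}|\Omega^j(y,x)|,
\]
apply the $\theta_+$ estimate to the first sum and the $\theta_-$ estimate to the second:
\[
\sum_{j<J}|\Omega^j|\lesssim Cr^{\theta_+}2^{J(\theta_+-\alpha)}\lesssim Cr^\alpha,\qquad
\sum_{j\ge J}|\Omega^j|\lesssim Cr^{\theta_-}2^{J(\theta_--\alpha)}\lesssim Cr^\alpha,
\]
using $2^J\asymp r^{-1}$ and the observation that each of the two geometric sums is controlled by its extremal term (the largest $j$ in the first sum, the smallest $j$ in the second). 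For $r\ge 1$, apply the $\theta_-$ estimate uniformly over $j\ge 0$; the geometric series yields $\sum_j|\Omega^j|\lesssim Cr^{\theta_-}\le Cr^\alpha$.

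I do not anticipate a serious obstacle. The only points worth verifying are that the hypothesized neighborhood of $\alpha$ genuinely contains points on both sides (automatic, as it is an open neighborhood) and that the dyadic balance is uniform in $(x,y)$, which it is, since the cutoff $J$ depends only on $|y-x|$. The whole argument is a standard Littlewood--Paley-style dyadic interpolation and requires no further structure on the $\Omega^j$.
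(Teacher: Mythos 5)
Your argument is correct and is the standard dyadic-balancing proof of this lemma (which the paper does not prove in-text but cites from \cite{Hos20, Hos21}); it matches the referenced argument in substance. The two key steps---absolute convergence from the $\theta_-<\alpha$ estimate alone, and the split of the sum at $J$ with $2^{-J}\asymp|y-x|$, applying $\theta_+$ on the low-frequency block and $\theta_-$ on the high-frequency block so that both pieces contract to $C|y-x|^\alpha$---are exactly what is needed, and the $|y-x|\ge1$ case is handled correctly with the $\theta_-$ bound and monotonicity of $r\mapsto r^{\theta_-}/r^\alpha$.
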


%%%%%%%%%%%%%%%%%%%%%%%%%%%%%%%%%%%%%%%%%%%%%%%%%%%%%%%%%%%%%%%%%%%%%%%%%%%%%%%%%%%

\section{Proof of Theorem \ref{mainthm}}\label{section:pointwise}

In this section, we prove Theorem \ref{mainthm}.
The two-component case (1) is a particular one of Theorem \ref{thm:abstractparaproduct} with $n=2$, $(\alpha_1,\alpha_2)=(\alpha,\beta)$, and
$$
\bsf{1}=\{f_1^j=\Delta_{j-1}f\}_{j=0}^\infty\in\bbB^\alpha,\qquad
\bsf{2}=\{f_2^j=\Delta_{j-1}g\}_{j=0}^\infty\in\bbB^\beta.
$$
We define $(\bsf{1},\bsf{2})=\{f_1^{<j-1}f_2^j\}_{j=0}^\infty$ instead of \eqref{eq:abstractparaproduct2} to obtain $f_{12}=\sum(\bsf{1},\bsf{2})=f\pl g$
(see Remark \ref{rem:<j-N}). Then we have
\begin{align*}
\Omega_{12}(\cdot,x)=f\pl g(\cdot)-\sum_{|k|<\alpha+\beta}\frac{(\cdot-x)^k}{k!}D_{12}^k(x)-\sum_{|k|<\alpha}\frac{(\cdot-x)^k}{k!}\partial^kf(x)\Omega^\beta(g)(\cdot,x)
\end{align*}
with a bounded continuous function $D_{12}^k=D^k(f,g)$ as in \eqref{eq:D(f,g)}.

From now on, we focus on the three-component case (2).

\begin{lmm}\label{lmm:R}
Under the assumptions of Theorem \ref{mainthm},
$$
\Big\{R^j:=\Delta_{j-1}(f\pl g)-\sum_{|k|<\alpha}\Delta_{<j-2}(\partial^kf)\Delta_{j-1}^kg\Big\}_{j=0}^\infty\in\bbB^{\alpha+\beta},
$$
where $\Delta_{j-1}^kg$ is defined by
$$
\Delta_{j-1}^kg(x)=\int_{\bbR^d}(\mathcal{F}^{-1}\rho_{j-1})(x-y)\frac{(y-x)^k}{k!}g(y)dy.
$$
\end{lmm}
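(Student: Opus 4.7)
The plan is to expand $\Delta_{j-1}(f\pl g)$ via the paraproduct decomposition, Taylor-expand the smooth low-frequency factor at $x$, and verify that the leading polynomial piece reproduces the correction $\sum_{|k|<\alpha}\Delta_{<j-2}(\partial^kf)\Delta_{j-1}^kg$ up to an error of order $2^{-j(\alpha+\beta)}$; the Taylor remainder integral contributes the same order. The Fourier-support condition for membership in $\bbB^{\alpha+\beta}$ is immediate, since $\Delta_{j-1}(f\pl g)$ lives in the annulus $\{|\xi|\sim 2^{j-1}\}$, and each product $\Delta_{<j-2}(\partial^kf)\Delta_{j-1}^kg$ is spectrally contained in a ball of radius $O(2^j)$.

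Concretely, I start from $f\pl g=\sum_{j'}\Delta_{<j'-1}f\cdot\Delta_{j'}g$. By the standard Fourier-support analysis of Bony's paraproduct only finitely many $j'$ in a bounded window around $j$ contribute to $\Delta_{j-1}(f\pl g)(x)=\sum_{j'}\int K_{j-1}(x-y)\Delta_{<j'-1}f(y)\Delta_{j'}g(y)\,dy$ with $K_{j-1}=\mathcal{F}^{-1}\rho_{j-1}$. For each such $j'$, Taylor-expand
\[
\Delta_{<j'-1}f(y)=\sum_{|k|<\alpha}\frac{(y-x)^k}{k!}\Delta_{<j'-1}(\partial^kf)(x)+\Omega^\alpha(\Delta_{<j'-1}f)(y,x)
\]
to split the integrand into a polynomial part $\sum_{|k|<\alpha}\Delta_{<j'-1}(\partial^kf)(x)\,\Delta_{j-1}^k\Delta_{j'}g(x)$ and a remainder integral.

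For the polynomial part, the Fourier supports of $\partial^k\rho_{j-1}$ and $\rho_{j'}$ intersect only for $|j'-(j-1)|\le 1$, so both the aggregated polynomial sum and the identity $\Delta_{j-1}^kg=\sum_{j'}\Delta_{j-1}^k\Delta_{j'}g$ restrict to $j'\in\{j-2,j-1,j\}$, and their difference collapses to
\[
\sum_{|k|<\alpha}\sum_{j'\in\{j-2,j-1,j\}}\bigl[\Delta_{<j'-1}(\partial^kf)(x)-\Delta_{<j-2}(\partial^kf)(x)\bigr]\Delta_{j-1}^k\Delta_{j'}g(x),
\]
where each bracketed factor is $\pm\Delta_i\partial^kf$ for some $i\in\{j-3,j-2\}$. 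Combining $\|\Delta_i\partial^kf\|_{L^\infty}\lesssim 2^{-j(\alpha-|k|)}\|f\|_{B_{\infty,\infty}^\alpha}$ with $\|\Delta_{j-1}^k\Delta_{j'}g\|_{L^\infty}\lesssim\int|K_{j-1}(z)||z|^{|k|}\,dz\cdot\|\Delta_{j'}g\|_{L^\infty}\lesssim 2^{-j(|k|+\beta)}\|g\|_{B_{\infty,\infty}^\beta}$ yields the desired $O(2^{-j(\alpha+\beta)})$ bound on each summand.

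For the Taylor remainder I would apply the standard estimate $|\Omega^\alpha(h)(y,x)|\lesssim\|h\|_{B_{\infty,\infty}^\alpha}|y-x|^\alpha$ to $h=\Delta_{<j'-1}f$, use the uniform bound $\|\Delta_{<j'-1}f\|_{B_{\infty,\infty}^\alpha}\lesssim\|f\|_{B_{\infty,\infty}^\alpha}$, the kernel moment $\int|K_{j-1}(z)||z|^\alpha\,dz\lesssim 2^{-j\alpha}$, and $\|\Delta_{j'}g\|_{L^\infty}\lesssim 2^{-j\beta}$ for the finitely many contributing $j'\sim j$. The main obstacle is the bookkeeping that identifies both truncated sums at the common dyadic scales $\{j-2,j-1,j\}$, so that the polynomial mismatch reduces to the clean finite difference $\Delta_{<j'-1}(\partial^kf)-\Delta_{<j-2}(\partial^kf)$; once this is in place the remaining estimates are routine kernel-moment and Besov-norm bounds.
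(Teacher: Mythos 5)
Your proposal is correct, but it takes a genuinely different route from the paper. The paper's proof is a one-line consequence of Theorem \ref{mainthm}(1), which has already been established at the start of Section 3: it integrates the identity \eqref{mainthm1} against the kernel $\mathcal{F}^{-1}(\rho_{j-1})(x-y)$ in $y$, uses the vanishing moments $\int\mathcal{F}^{-1}(\rho_{j-1})(z)\,z^k\,dz=0$ (valid for $j\ge0$) to kill every polynomial term, so that only $\Delta_{j-1}(f\pl g)-\sum_{|k|<\alpha}\partial^kf\,\Delta_{j-1}^kg=O(2^{-j(\alpha+\beta)})$ survives, and then replaces $\partial^kf$ by $\Delta_{<j-2}(\partial^kf)$ at an admissible cost via $\|\Delta_{\ge j-2}(\partial^kf)\|_{L^\infty}\lesssim2^{-j(\alpha-|k|)}$. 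You instead prove the bound from scratch: decompose $\Delta_{j-1}(f\pl g)$ into finitely many $\Delta_{j-1}(\Delta_{<j'-1}f\,\Delta_{j'}g)$, Taylor-expand the low-frequency factor $\Delta_{<j'-1}f$ at $x$, identify the polynomial piece with $\sum_k\Delta_{<j'-1}(\partial^kf)(x)\,\Delta_{j-1}^k\Delta_{j'}g(x)$, and control the mismatch against $\sum_k\Delta_{<j-2}(\partial^kf)\,\Delta_{j-1}^kg$ plus the Taylor remainder. Both work; the paper's route is shorter because it recycles the heavy machinery of Theorem \ref{thm:abstractparaproduct} (and exploits the vanishing moments, which your argument never needs), whereas yours is self-contained and essentially re-derives the $n=2$ pointwise estimate at the level of individual Littlewood--Paley blocks, at the price of more bookkeeping. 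One small remark on that bookkeeping: you Taylor-expand only for $j'$ in the contributing window of $\Delta_{j-1}(f\pl g)$ and then separately restrict the polynomial sum to $j'\in\{j-2,j-1,j\}$; to make the accounting watertight you should expand over a single fixed finite window that contains both sets (say $|j'-j|\le N_0$), noting that outside it both the block $\Delta_{j-1}(\Delta_{<j'-1}f\,\Delta_{j'}g)$ and the factor $\Delta_{j-1}^k\Delta_{j'}g$ vanish, so the remainder is zero too. With that adjustment the finite-difference reduction to $\pm\Delta_i(\partial^kf)$, $i\in\{j-3,j-2\}$, and the subsequent kernel-moment and Besov bounds are exactly right.
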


\begin{proof}
It is sufficient to consider $j\ge0$.
We integrate the both sides of \eqref{mainthm1} multiplied by $\mathcal{F}^{-1}(\rho_{j-1})(x-y)$ over $y$.
Since $\int\mathcal{F}^{-1}(\rho_{j-1})(x)x^kdx=0$ for any $k\in\bbN^d$, only the terms
\begin{align*}
\Delta_{j-1}(f\pl g)-\sum_{|k|<\alpha}\partial^kf\Delta_{j-1}^kg=O(2^{-j(\alpha+\beta)})
\end{align*}
remain.
Since $\Delta_i\Delta_j^kg=\Delta_j^k\Delta_ig$ vanishes if $|i-j|\ge2$, we can see 
$$
|\Delta_{j-1}^kg(x)|\le\int\Big|\mathcal{F}^{-1}(\rho_{j-1})(x)\frac{x^k}{k!}\Big|dx\sum_{|i-j|\le1}\|\Delta_{i-1}g\|_{L^\infty}\lesssim2^{-j(\beta+|k|)}
$$
by the scaling property of $\mathcal{F}^{-1}\rho_j$.
Since
$$
\|\Delta_{\ge j-2}(\partial^kf)\Delta_{j-1}^kg\|_{L^\infty}\lesssim2^{-j(\alpha-|k|)}2^{-j(\beta+|k|)}=2^{-j(\alpha+\beta)}
$$
for any $|k|<\alpha$, we have the required estimate for $R^j$.
\end{proof}

\begin{proof}[Proof of Theorem \ref{mainthm}]
Let
\begin{align*}
&\bsf{1}^{\, (k)}=\{\Delta_{j-1}(\partial^kf)\}_{j=0}^\infty\in\bbB^{\alpha-|k|},\qquad
\bsf{2}^{\, (k)}=\{\Delta_{j-1}^kg\}_{j=0}^\infty\in\bbB^{\beta+|k|},\\
&\bsf{3}=\{R^j\}_{j=0}^\infty\in\bbB^{\alpha+\beta},\qquad
\bsf{4}=\{\Delta_{j-1}h\}_{j=0}^\infty\in\bbB^\gamma.
\end{align*}
Then by Lemma \ref{lmm:R}, we can write
$$
f\pl g=\sum_{|k|<\alpha}f_{12}^{(k)}+f_3,\qquad
(f\pl g)\pl h=\sum_{|k|<\alpha}f_{124}^{(k)}+f_{34},
$$
where $f_{12}^{(k)}=\sum(\bsf{1}^{\, (k)},\bsf{2}^{\, (k)})$, $f_3=\sum\bsf{3}$, $f_{124}^{(k)}=\sum(\bsf{1}^{\, (k)},\bsf{2}^{\, (k)},\bsf{4})$, and $f_{34}=\sum(\bsf{3},\bsf{4})$.
Here we use the definition $(\boldsymbol{f},\boldsymbol{g})=\{f^{<j-1}g^j\}$ instead of \eqref{eq:abstractparaproduct2}.
By Theorem \ref{thm:abstractparaproduct}, each of $f_{124}^{(k)}$ and $f_{34}$ has the estimate of the forms
\begin{align*}
\Omega_{124}^{(k)}&=f_{124}^{(k)}-T_{124}^{(k)}-T_1^{(k)}\Omega_{24}^{(k)}-T_{12}^{(k)}\Omega_4=O(|\cdot-x|^{\alpha+\beta+\gamma})\qquad(|k|<\alpha),\\
\Omega_{34}&=f_{34}-T_{34}-T_3\Omega_4=O(|\cdot-x|^{\alpha+\beta+\gamma}).
\end{align*}
We can conclude Theorem \ref{mainthm} by reorganizing the sum $\sum_{|k|<\alpha}\Omega_{124}^{(k)}+\Omega_{34}$.
The sum $\sum_{|k|<\alpha}T_{124}^{(k)}+T_{34}$ is reorganized as the polynomial part $\sum_{|\ell|<\alpha+\beta+\gamma}\frac{(\cdot-x)^\ell}{\ell!}D^\ell(f,g,h)(x)$.
Next we consider the terms containing $\Omega_4$. Note that $T_{12}^{(k)}=\sum_{|\ell|<\alpha+\beta}\frac{(\cdot-x)^\ell}{\ell!}D_{12}^{(k),\ell}$, where
\begin{align*}
D_{12}^{(k),\ell}=\partial^\ell f_{12}^{(k)}-\sum_{\substack{\ell_1+\ell_2=\ell \\ |\ell_1|<\alpha-|k|,\ |\ell_2|\ge\beta+|k|}}\binom{\ell}{\ell_1}\partial^{\ell_1}f_1^{(k)}\partial^{\ell_2}f_2^{(k)}.
\end{align*}
Since $f_2^{(k)}=\sum_{j=-1}^\infty\Delta_j^kg=\int\delta(\cdot-y)\, (y-\cdot)^kg(y)dy$ vanishes if $k\neq0$,
we have
$\sum_{|k|<\alpha}T_{12}^{(k)}+T_3=\sum_{|\ell|<\alpha+\beta}\frac{(\cdot-x)^\ell}{\ell!}D^\ell(x)$ with
\begin{align*}
D^\ell
&=\sum_{|k|<\alpha}D_{12}^{(k),\ell}+D_3^\ell\\
&=\sum_{|k|<\alpha}\partial^\ell f_{12}^{(k)}-\sum_{\substack{\ell_1+\ell_2=\ell \\ |\ell_1|<\alpha,\ |\ell_2|\ge\beta}}\binom{\ell}{\ell_1}\partial^{\ell_1}f_1^{(0)}\partial^{\ell_2}f_2^{(0)}+\partial^\ell f_3\\
&=\partial^\ell(f\pl g)-\sum_{\substack{\ell_1+\ell_2=\ell \\ |\ell_1|<\alpha,\ |\ell_2|\ge\beta}}\binom{\ell}{\ell_1}\partial^{\ell_1}f\partial^{\ell_2}g\\
&=D^\ell(f,g).
\end{align*}
Hence $(\sum_{|k|<\alpha}T_{12}^k+T_3)\Omega_4=\sum_{|\ell|<\alpha+\beta}\frac{(\cdot-x)^\ell}{\ell!}D^\ell(f,g)(x)\Omega^\gamma(h)$.
Finally we consider the term $T_1^{(k)}\Omega_{24}^{(k)}$.
For $k=0$, we have
$$
T_1^{(0)}\Omega_{24}^{(0)}
=\sum_{|\ell|<\alpha}\frac{(\cdot-x)^\ell}{\ell!}\partial^\ell f(x)\Omega^{\beta,\gamma}(g,h)(\cdot,x).
$$
For $k\neq0$, since $f_2^{(k)}=0$,
$$
\Omega_{24}^{(k)}
=f_{24}^{(k)}-\sum_{|\ell|<\beta+\gamma+|k|}\frac{(\cdot-x)^\ell}{\ell!}\partial^\ell f_{24}^{(k)},
$$
where $f_{24}^{(k)}=\sum_{j}\Delta_{<j-1}^kg\Delta_jh=:g\mpl{k}h$. This is the third term of the right-hand side of \eqref{mainthm2}.
\end{proof}

\section*{Acknowledgements}

The author thanks anonymous referee for helpful comments that improved the quality of the paper.

%%%%%%%%%%%%%%%%%%%%%%%%%%%%%%%%%%%%%%%%%%%%%%%%%%


\begin{thebibliography}{99}
%
% The \bibitem commands: 
% Please follow "Notice to Authors" for referencing.  You 
% must specify bold and italic fonts yourself. 
%

\bibitem{BB19}
Bailleul,~I. and Bernicot,~F.,
High order paracontrolled calculus, 
\textit{Forum Math. Sigma}, \textbf{7} (2019), e44, 94 pp.

\bibitem{BCD11}
Bahouri,~H., Chemin,~J.-Y., and Danchin,~R.,
\textit{Fourier Analysis and Nonlinear Partial Differential Equations},
Springer, 2011.

\bibitem{BH21a}
Bailleul,~I. and Hoshino,~M.,
Paracontrolled calculus and regularity structures I,
\textit{J. Math. Soc. Japan}, \textbf{73} (2021), 553--595.

\bibitem{BH21b}
Bailleul,~I. and Hoshino,~M.,
Paracontrolled calculus and regularity structures II,
\textit{J. \'{E}c. polytech. Math.}, \textbf{8} (2021), 1275--1328.

\bibitem{GIP15}
Gubinelli,~M, Imkeller,~P., and Perkowski,~N.,
Paracontrolled distributions and singular PDEs,
\textit{Forum Math. Pi}, \textbf{3} (2015), e6, 75pp.

\bibitem{Hai14}
Hairer,~M.,
A theory of regularity structures,
\textit{Invent. Math.}, \textbf{198} (2014), 269--504.

\bibitem{Hos20}
Hoshino,~M.,
Commutator estimates from a viewpoint of regularity structures,
\textit{RIMS K\^oky\^uroku Bessatsu}, B79 (2020), 179--197,
\texttt{arXiv:1903.00623}.
 
\bibitem{Hos21}
Hoshino,~M.,
Iterated paraproducts and iterated commutator estimates in Besov spaces,
\textit{Advanced Studies in Pure Mathematics}, \textbf{87} (2021), 239-259 (Proceedings of ``The 12th Mathematical Society of Japan Seasonal Institute, Stochastic Analysis, Random Fields and Integrable Probability"),
\texttt{arXiv:2001.07414}.

\bibitem{MP20}
Martin,~J. and Perkowski,~N.,
A Littlewood-Paley description of modelled distributions,
\textit{J. Funct. Anal.}, \textbf{279} (2020), 108634, 22 pp.

%\bibitem{BCD11}
%Bahouri,~H., Chemin,~J.-Y., and Danchin,~R.,
%\textit{Fourier analysis and nonlinear partial differential equations},
%Springer, Heidelberg (2011).
%
%\bibitem{BB16}
%Bailleul,~I. and Bernicot,~F.,
%Heat semigroup and singular PDEs,
%\textit{J. Funct. Anal.}, \textbf{270} (2016), 3344--3451.
%
%\bibitem{BBF18}
%Bailleul,~I., Bernicot,~F., and Frey,~D.,
%Spacetime paraproducts for paracontrolled calculus, 3d-PAM and multiplicative Burgers equations,
%\textit{Ann. Scient. \'{E}c. Norm. Sup.}, \textbf{51} (2018), 1399--1457.
%
%\bibitem{BB19}
%Bailleul,~I. and Bernicot,~F.,
%High order paracontrolled calculus,
%\textit{Forum Math. Sigma}, \textbf{7} (2019), e44, 94 pp.
%
%\bibitem{BH21+}
%Bailleul,~I. and Hoshino,~M.,
%A tourist's guide to regularity structures and singular stochastic PDEs,
%arXiv:2006.03524.
%
%\bibitem{BB21+}
%Bailleul,~I. and Bruned,~Y.,
%Locality for singular stochastic PDEs,
%arXiv:2109.00399.
%
%\bibitem{BCCH21}
%Bruned,~Y., Chandra,~A., Chevyrev,~I., and Hairer,~M.,
%Renormalising SPDEs in regularity structures.
%\textit{J. Eur. Math. Soc. (JEMS)}, \textbf{23} (2021), 869--947.
%
%\bibitem{BGHZ21+}
%Bruned,~Y., Gabriel,~F., Hairer,~M., and Zambotti,~L.,
%Geometric stochastic heat equations,
%arXiv:1902.02884.
%
%\bibitem{BHZ19}
%Bruned,~Y., Hairer,~M., and Zambotti,~L.,
%Algebraic renormalisation of regularity structures,
%\textit{Invent. Math.}, \textbf{215} (2019), 1039--1156.
%
%\bibitem{CZ21}
%Caravenna,~F. and Zambotti,~L.,
%Hairer's reconstruction theorem without regularity structures,
%arXiv:2005.09287.
%
%\bibitem{CH16+}
%Chandra,~A. and Hairer,~M.,
%An analytic BPHZ theorem for regularity structures,
%arXiv:1612.08138.
%
%\bibitem{DDD19}
%Dahlqvist,~A., Diehl,~J., and Driver,~B. K.,
%The parabolic Anderson model on Riemann surfaces,
%\textit{Probab. Theory Related Fields}, \textbf{174} (2019), 369--444.
%
%\bibitem{FG19a},
%Furlan,~M. and Gubinelli,~M.,
%Paracontrolled quasilinear SPDEs. 
%\textit{Ann. Probab.}, \textbf{47} (2019), 1096--1135.
%
%\bibitem{GH19a}
%Gerencs\'{e}r,~M. and Hairer,~M.,
%Singular SPDEs in domains with boundaries,
%\textit{Probab. Theory Related Fields}, \textbf{173} (2019), 697--758.
%
%\bibitem{Hai14b}
%Hairer,~M.,
%A theory of regularity structures,
%\textit{Invent. Math.}, \textbf{198} (2014), 269--504.
%
%\bibitem{HL17}
%Hairer,~M. and Labb\'{e},~C.,
%The reconstruction theorem in Besov spaces,
%\textit{J. Funct. Anal.}, \textbf{273} (2017), 2578--2618.
%
%\bibitem{HL18}
%Hairer,~M. and Labb\'{e},~C.,
%Multiplicative stochastic heat equations on the whole space,
%\textit{J. Eur. Math. Soc. (JEMS)}, \textbf{20} (2018), 1005--1054.
%
%\bibitem{HR20}
%Hensel,~S. and Rosati,~T.,
%Modelled distributions of Triebel-Lizorkin type,
%\textit{Studia Math.}, \textbf{252} (2020), 251--297.
%
%\bibitem{OSSW18+}
%Otto,~F., Sauer,~J., Smith,~S., and Weber,~H.,
%Parabolic equations with rough coefficients and singular forcing,
%arXiv:1803.07884.
%
%\bibitem{RS20+}
%Rinaldi,~P. and Sclavi,~F.,
%Reconstrucion theorem for germs of distributions on smooth manifolds,
%arXiv:2012.01261.

\end{thebibliography}
\end{document}